\newenvironment{dedication}
        {\vspace{3ex}\begin{quotation}\begin{center}\begin{em}}
        {\par\end{em}\end{center}\end{quotation}}
\newcommand{\R}{{\mathbb R}}
\newtheorem{theorem}{Theorem}[section]
\newtheorem{thm}[theorem]{Theorem}
\newtheorem{remark}[theorem]{Remark}
\newtheorem{lemma}[theorem]{Lemma}
\newtheorem{proposition}[theorem]{Proposition}
\begin{document}

\title{Combinatorial formulas for some generalized Ekeland-Hofer-Zehnder capacities of convex polytopes}
\date{September 13, 2021}
\author{Kun Shi and Guangcun Lu
\thanks{Corresponding author
\endgraf \hspace{2mm} Partially supported
by the NNSF  11271044 of China.
\endgraf\hspace{2mm} 2010 {\it Mathematics Subject Classification.}
 53D05, 52B12 (primary), 53D05, 53C23 (secondary).}}
 \maketitle \vspace{-0.3in}
\begin{dedication}
\hfill{Dedicated to Professor Claude Viterbo on the occasion of his sixtieth birthday}
\end{dedication}
\vspace{0.1in}


\abstract{
Motivated by Pazit Haim-Kislev's   combinatorial formula for the Ekeland-Hofer-Zehnder capacities of convex polytopes,
we give corresponding formulas for $\Psi$-Ekeland-Hofer-Zehnder and coisotropic Ekeland-Hofer-Zehnder capacities of convex
polytopes introduced by the second named author and others recently.
Contrary to Pazit Haim-Kislev's subadditivity result for the Ekeland-Hofer-Zehnder capacities of convex domains,
we  show that the coisotropic Hofer-Zehnder capacities
 satisfy the  superadditivity for suitable hyperplane cuts of two-dimensional convex domains.
} \vspace{-0.1in}
\medskip\vspace{12mm}

\maketitle \vspace{-0.5in}


\tableofcontents

\section{Introduction and results}
\setcounter{equation}{0}

Symplectic capacities are important invariants in studies of symplectic topology.
  Different symplectic capacities  measure the ``symplectic size" of sets from different views.
  Precise computations of them are usually difficult.

For  a compact convex  domain  $K$ with smooth boundary $\mathcal{S}=\partial K$
in the standard symplectic Euclidean space $(\mathbb{R}^{2n},\omega_0)$,
  Ekeland-Hofer \cite{EH89} (see also  \cite{Sik90}) and  Hofer-Zehnder \cite{HoZe90} showed, respectively, that
  its Ekeland-Hofer capacity $c_{\rm EH}(K)$ and Hofer-Zehnder capacity $c_{\rm HZ}(K)$ were equal to
    \begin{eqnarray}\label{e:EHZ}
c_{\rm EHZ}(K):=\min\{A(x)>0\,|\,x\;\text{is a closed characteristic on}\;\mathcal{S}\}
\end{eqnarray}
 (called the Ekeland-Hofer-Zehnder capacity below), where by a  \textsf{closed characteristic} on $\mathcal{S}$ we mean
  a $C^1$ embedding $z$ from $S^1=[0, T]/\{0,T\}$ into $\mathcal{S}$ satisfying
   $\dot{z}(t)\in (\mathcal{L}_{\mathcal{S}})_{z(t)}$ for all $t\in [0, T]$, where
$$
\mathcal{L}_{\mathcal{S}}=\{(x,\xi)\in T\mathcal{S}\mid \omega_{0x}(\xi,\eta)=0 ,\forall \eta\in T_x\mathcal{S}\}
$$
and the action of a path $z\in W^{1,2}([0,T], \mathbb{R}^{2n})$ is defined by
  \begin{equation}\label{e:action1}
A(z)=\frac{1}{2}\int_0^T\langle -J\dot{z},z\rangle dt
\end{equation}
  with $J=\left(
           \begin{array}{cc}
             0 & -I_n \\
             I_n & 0 \\
           \end{array}
         \right)$,  where $z\in W^{1,2}([0,T],\mathbb{R}^{2n})$ if $z$ is absolutely continuous and
         $$\int_{0}^T\|z(t)\|^2dt<\infty \quad\text{and} \quad \int_{0}^T\|\dot{z}(t)\|^2dt<\infty.$$
  We equip $H^1([0,T],\mathbb{R}^{2n}):=W^{1,2}([0,T], \mathbb{R}^{2n})$ the natural Sobolev norm:
  $$\|z\|_{W^{1,2}}:=\left(\int_0^T\|z(t)\|^2+\|\dot{z}(t)\|^2dt\right)^{\frac{1}{2}}.$$
When the smoothness assumption of the boundary $\mathcal{S}$ is thrown away, then (\ref{e:EHZ}) is still true if ``closed characteristic'' in the right side of (\ref{e:EHZ}) may be replaced by ``generalized closed characteristic", where a \textsf{generalized closed characteristic} on $\mathcal{S}$
is a $T$-periodic nonconstant absolutely continuous curve $z:\mathbb{R}\rightarrow\mathbb{R}^{2n}$ (for some $T>0$) such that
 $z(\mathbb{R})\subset\mathcal{S}$ and $\dot{z}(t)\in JN_{\mathcal{S}}(z(t))$ a.e., where $N_{\mathcal{S}}(x)=\{y\in\mathbb{R}^{2n}\mid\langle u-x,y\rangle\leqslant 0, \forall u\in K\}$ is the normal cone to $K$ at $x\in\mathcal{S}$.
The action of such a generalized closed characteristic $x:[0,T]\rightarrow\mathcal{S}$ is still defined by (\ref{e:action1}).

  In general,  it is  difficult to compute $c_{\rm EHZ}(K)$ by finding minimal closed characteristics with (\ref{e:EHZ}).
   If $K$ is a convex polytope with   $(2n-1)$-dimensional facets $\{F_i\}_{i=1}^{{\bf F}_K}$,
   $n_i$ is the unit outer normal to $F_i$, and  $h_i = h_K(n_i)$ the ``\textbf{oriented height}" of $F_i$ given by
  the support function of $K$, $h_K(y) := \sup_{x \in K} \langle x,y \rangle$,
   starting from (\ref{e:EHZ})   Pazit Haim-Kislev \cite{PH19} recently established the following beautiful
 combinatorial formula for $c_{\rm EHZ}(K)$:
  \begin{equation}\label{e:Comb formula}
c_{\rm EHZ}(K) = \frac{1}{2} \left[ \max_{\sigma\in S_{{\bf F}_K}, (\beta_i) \in M(K)}  \sum_{1 \leq j < i \leq {\bf F}_K}{} \beta_{\sigma(i)} \beta_{\sigma(j)} \omega_0(n_{\sigma(i)},n_{\sigma(j)}) \right]^{-1},
\end{equation}
where $S_{{\bf F}_K}$ is the symmetric group on ${\bf F}_K$ letters and
$$
M(K) = \left\{ (\beta_i)_{i=1}^{{\bf F}_K} \,\bigg|\, \beta_i \geq 0, \sum_{i=1}^{{\bf F}_K} \beta_i h_i = 1, \sum_{i=1}^{{\bf F}_K} \beta_i n_i = 0 \right\}.
$$
   As an important application,  Pazit Haim-Kislev \cite{PH19} proved a subadditivity property of the capacity $c_{\rm EHZ}$
   for hyperplane cuts of arbitrary convex domains, which solved a special case of the subadditivity conjecture for capacities
 (\cite{AKP14}).

Recently, motivated by Clarke \cite{Cl82, Cl83} and  Ekeland \cite{Ek17}
Rongrong Jin and the second named author introduced relative versions (or generalizations)  of
the Ekeland-Hofer capacity and the Hofer-Zehnder capacity   in \cite{JinLu1916}.
 Precisely, for  a symplectic manifold $(M,\omega)$ and for a $\Psi\in{\rm Symp}(M,\omega)$ with ${\rm Fix}(\Psi)\ne\emptyset$,
  we defined a relative version of the Hofer-Zehnder capacity $c_{\rm HZ}(M,\omega)$ of $(M, \omega)$ with respect to $\Psi$,
  $c^\Psi_{\rm HZ}(M,\omega)$, which becomes $c_{\rm HZ}(M,\omega)$ if $\Psi=id_M$.
 For a symplectic  matrix  $\Psi\in{\rm Sp}(2n,\mathbb{R})$ with ${\rm Fix}(\Psi)\ne\emptyset$,
 and for each $B\subset\mathbb{R}^{2n}$ such that $B\cap {\rm Fix}(\Psi)\neq \emptyset$,
   we also introduced  a relative version of the Ekeland-Hofer capacity $c_{\rm EH}(B)$ of $B$ with respect to $\Psi$,
  $c^\Psi_{\rm EH}(B)$, which becomes $c_{\rm EH}(B)$ if $\Psi=I_{2n}$.
  If a compact convex  domain  $K\subset\mathbb{R}^{2n}$ with  boundary $\mathcal{S}=\partial K$
contains a fixed point of $\Psi\in{\rm Sp}(2n,\mathbb{R})$ in the interior of it, we proved in \cite{JinLu1916}:
  \begin{eqnarray}\label{e:PsiEHZ}
c^\Psi_{\rm EH}(K)=c^\Psi_{\rm HZ}(K)=\min\{A(x)>0\,|\,x\;\text{is a generalized $\Psi$-characteristic on}\;\mathcal{S}\},
\end{eqnarray}
where a \textsf{generalized $\Psi$-characteristic} on $\mathcal{S}$ is
a nonconstant absolutely continuous curve $z:[0,T]\rightarrow\mathbb{R}^{2n}$ (for some $T>0$) such that
 $z([0,T])\subset\mathcal{S}$, $z(T)=\Psi z(0)$ and $\dot{z}(t)\in JN_{\mathcal{S}}(z(t))$ a.e., where $N_{\mathcal{S}}(x)$
  is the normal cone to $K$ at $x\in\mathcal{S}$ as above, and the action $A(z)$ of $z$ is still defined by (\ref{e:action1}).
(If $\mathcal{S}$ is  $C^{1,1}$-smooth, ``generalized closed characteristic'' in the right side of (\ref{e:PsiEHZ})
may be replaced by ``closed characteristic", where a \textsf{$\Psi$-characteristic} on $\mathcal{S}$ is a $C^1$ embedding $z$ from $[0, T]$ (for some $T >0$) into $\mathcal{S}$ such that  $z(T)=\Psi z(0)$ and $\dot{z}\in (\mathcal{L}_{\mathcal{S}})_{z(t)}$
for all $t\in [0,T]$).
Our first result  is an analogue of (\ref{e:Comb formula}) for $c^\Psi_{\rm EHZ}(K):=c^\Psi_{\rm EH}(K)=c^\Psi_{\rm HZ}(K)$.

\begin{theorem}\label{th:main}
Let $K$ be a convex polytope as above (\ref{e:Comb formula}).
Suppose that $\Psi\in{\rm Sp}(2n,\mathbb{R})$  has a fixed point sitting in the interior of $K$.
Then
$$
c^{\Psi}_{{\rm EHZ}}(K)=\min_{\bigl((\beta_i)_{i=1}^{{\bf F}_K},v, \sigma\bigr)\in M_{\Psi}(K)}\frac{2}{4\sum_{1\leqslant j<i\leqslant{\bf F}_K}
\beta_{\sigma(i)}\beta_{\sigma(j)}\omega_0(n_{\sigma(j)},n_{\sigma(i)})-\omega_0(\Psi v, v)},
$$
where
$$
M_{\Psi}(K)=\left\{\big((\beta_i)_{i=1}^{{\bf F}_K},v, \sigma\bigr)\,\bigg|\,\begin{array}{ll}
&\sigma\in
S_{{\bf F}_K},\;\beta_i\geqslant
0,\;\sum_{i=1}^{{\bf F}_K}\beta_ih_i=1,\;\sum_{i=1}^{{\bf F}_K}2\beta_i Jn_i=\Psi v-v, \\
&4\sum_{1\leqslant j<i\leqslant{\bf F}_K}
\beta_{\sigma(i)}\beta_{\sigma(j)}\omega_0(n_{\sigma(j)},n_{\sigma(i)})>\omega_0(\Psi v, v),\;v\in
E_{\Psi}
\end{array}
\right\}
$$
with  $E_{\Psi}$ being the orthogonal complement of ${\rm Ker}(\Psi-I_{2n})$ in $\mathbb{R}^{2n}$.
 \end{theorem}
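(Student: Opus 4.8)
The plan is to mimic Haim-Kislev's argument for \eqref{e:Comb formula}, inserting the extra bookkeeping forced by the boundary condition $z(T)=\Psi z(0)$. Starting from the variational characterization \eqref{e:PsiEHZ}, the first step is to pass to a finite-dimensional minimax problem. Recall that a generalized $\Psi$-characteristic on $\mathcal{S}=\partial K$ is, after reparametrization, a piecewise-linear closed-up curve whose velocity on the linear piece lying in the facet $F_i$ points in the direction $Jn_i$ (since $N_{\mathcal S}(x)=\mathbb R_{\geqslant0}n_i$ there). So a candidate orbit is encoded by an ordering $\sigma\in S_{{\bf F}_K}$ of the facets it visits together with the (signed) times $\beta_i\geqslant 0$ spent on each; the velocity on the $i$-th segment is $2\beta_i Jn_i$ (the factor $2$ being a normalization convenient for the action formula). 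The closed-up condition $z(T)-z(0)=\Psi z(0)-z(0)$ becomes $\sum_i 2\beta_i Jn_i=\Psi v-v$ where $v:=z(0)$, and we may assume $v\in E_\Psi$ because translating by an element of $\mathrm{Ker}(\Psi-I_{2n})$ changes neither the orbit's image condition nor the action in the relevant way. The support-function normalization $\sum_i\beta_i h_i=1$ comes from rescaling so that the orbit lies on $\partial K$ after the time-one dilation, exactly as in \cite{PH19}.

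The second step is to compute the action of such a piecewise-linear curve. Writing $z(t)$ explicitly as a concatenation of segments with velocities $2\beta_{\sigma(k)}Jn_{\sigma(k)}$ and base point $v$, and plugging into $A(z)=\tfrac12\int_0^T\langle -J\dot z,z\rangle\,dt$, the integral splits into a "diagonal" part, a "strictly-upper-triangular" part coming from pairs of distinct segments, and a boundary contribution involving $v$. The diagonal terms vanish because $\langle -J(Jn_i),n_i\rangle=\langle n_i,n_i\rangle\,\omega_0(\cdot)$-type expressions cancel by antisymmetry of $\omega_0$; the cross terms assemble into $\sum_{j<i}\beta_{\sigma(i)}\beta_{\sigma(j)}\omega_0(n_{\sigma(i)},n_{\sigma(j)})$ with a combinatorial factor; and carrying $v$ through the endpoint terms produces exactly the $-\tfrac14\omega_0(\Psi v,v)$ correction (using $z(T)=\Psi v$, $z(0)=v$ and the constraint $\sum 2\beta_iJn_i=\Psi v-v$). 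After collecting constants one gets
$$
A(z)=\frac{1}{2}\Bigl(4\sum_{1\leqslant j<i\leqslant{\bf F}_K}\beta_{\sigma(i)}\beta_{\sigma(j)}\omega_0(n_{\sigma(i)},n_{\sigma(j)})-\omega_0(\Psi v,v)\Bigr)\cdot(\text{time factor}),
$$
and the homogeneity under rescaling $\beta\mapsto\lambda\beta$, $v\mapsto\sqrt\lambda v$ (which respects the affine constraints only after fixing the normalization $\sum\beta_ih_i=1$) converts "minimize $A$" into "maximize the bracket", i.e. into the reciprocal expression in the statement; the positivity requirement in $M_\Psi(K)$ is just the condition that the quantity being inverted is positive, which is automatic for genuine orbits since $A(z)>0$.

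The third step is the two-sided comparison: every generalized $\Psi$-characteristic gives an element of $M_\Psi(K)$ with $A(z)$ equal to the displayed value (so $c^\Psi_{\rm EHZ}(K)\geqslant$ the min), and conversely every element of $M_\Psi(K)$ can be realized, after the rescaling above, by an actual piecewise-linear curve on $\partial K$ satisfying $z(T)=\Psi z(0)$, hence $c^\Psi_{\rm EHZ}(K)\leqslant$ the min. For the forward direction one must show a minimal generalized $\Psi$-characteristic is piecewise linear with segments in distinct facets (an orbit revisiting a facet can be shortcut, lowering the action — this is where convexity and the structure of $N_{\mathcal S}$ enter, following \cite{PH19}); for the backward direction one checks the constructed broken line actually stays on $\partial K$, which is where the support-function data $h_i=h_K(n_i)$ is used to choose the correct vertex-to-vertex path — one orders the facets by $\sigma$ and verifies the partial sums of $2\beta_{\sigma(k)}Jn_{\sigma(k)}$, translated by $v$, land on $\mathcal S$. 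I expect the main obstacle to be the backward realization step: unlike the $\Psi=I$ case, the endpoint is not the starting point but $\Psi v$, so one must argue that the broken line can be placed on $\partial K$ even though it is not a loop — this requires using that the fixed point of $\Psi$ lies in $\mathrm{int}\,K$ (to get a "center" about which the non-closed polygonal path can be inscribed) and a careful choice of which translate of each segment to use. The antisymmetry cancellations in the action computation and the homogeneity rescaling are routine; the combinatorial reduction to distinct facets is essentially \cite{PH19}; the genuinely new content is tracking $v\in E_\Psi$ through both the constraint $\sum 2\beta_iJn_i=\Psi v-v$ and the action's $\omega_0(\Psi v,v)$ term, and ensuring the realization lemma still goes through.
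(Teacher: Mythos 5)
Your overall outline (reduce to piecewise-linear data, compute the action via the $\omega_0$-bilinear formula, normalize, take reciprocals) matches the shape of the final formula, but the core mechanism of your argument is different from the paper's and, as you yourself suspect, it breaks at the "backward realization" step — and that break is genuine, not just a technical obstacle to be patched. You work entirely in the primal formulation (\ref{e:PsiEHZ}), i.e.\ with generalized $\Psi$-characteristics lying on $\partial K$, and your upper bound requires that \emph{every} element of $M_\Psi(K)$ be realized by an actual broken line on $\partial K$ with $z(T)=\Psi z(0)$. This is false in general: membership in $M_\Psi(K)$ only encodes the closing-up constraint $\sum_i 2\beta_i Jn_i=\Psi v-v$, the normalization $\sum_i\beta_i h_i=1$ and positivity of the quadratic expression; it does not encode the geometric requirement that the $\sigma(i)$-th segment lie inside the facet $F_{\sigma(i)}$, which is what being a characteristic on $\partial K$ demands. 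No choice of translate or "center" coming from the interior fixed point of $\Psi$ fixes this, and indeed the same issue already arises for $\Psi=I_{2n}$ in \cite{PH19}: generic elements of $M(K)$ there are not closed characteristics either.

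The missing idea is the dual (Clarke-type) variational characterization used in the paper: by Theorem~\ref{th:convex} (from \cite{JinLu1916}), $c^\Psi_{\rm EHZ}(K)=\min\{I_K(x)\,|\,x\in\mathcal{A}_\Psi\}$ with $I_K(x)=\int_0^1H_K^*(-J\dot x)\,dt$, where competitors only satisfy $x(1)=\Psi x(0)$, $x(0)\in E_\Psi$, $A(x)=1$ — they need \emph{not} lie on $\partial K$. With this, the upper bound is immediate: from any $\bigl((\beta_i),v,\sigma\bigr)\in M_\Psi(K)$ one builds a piecewise affine path in $\mathcal{A}_\Psi$ (nowhere required to touch $\partial K$) whose velocities are multiples of the $p_i=\tfrac{2}{h_i}Jn_i$, and Proposition~\ref{prop:1} evaluates $I_K$ on it as exactly the displayed reciprocal; minimality of $I_K$ does the rest. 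The lower bound likewise runs through the dual problem: one approximates a minimizer of $I_K$ by piecewise affine paths (Lemma~\ref{lem:PH3.1}) and applies the path versions of the rearrangement lemmas (Lemmas~\ref{lem:1},~\ref{lem:2}) plus a compactness argument (Theorem~\ref{th:4.1}); note these rearrangements do not "shortcut an orbit on $\partial K$ to lower its action" — such surgery would leave $\partial K$ — they operate on abstract action-one paths and do not decrease $\int\langle-J\dot z,z\rangle$, and only at the very end is the minimizer converted back into a genuine $\Psi$-characteristic via Theorem~\ref{th:convex}. Finally, the interior fixed point of $\Psi$ is not used for any inscription argument: it is used to translate $K$ so that $0\in{\rm Int}(K)$ (so the dual characterization applies and the capacity is unchanged), and then to check $M_\Psi(\hat K)=M_\Psi(K)$ via $\omega_0(p,\Psi v)=\omega_0(p,v)$. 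Without replacing your realization step by this dual-problem argument, the proof does not go through.
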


{\it Note}: Under our convention $\langle x,y\rangle=\omega_0(x, Jy)$, 
$\omega_0(n_{\sigma(i)},n_{\sigma(j)})$ in (\ref{e:Comb formula}) should be changed into
$\omega_0(n_{\sigma(j)},n_{\sigma(i)})$.

 Lisi  and Rieser \cite{LR13} introduced the notion of a coisotropic capacity and constructed
a coisotropic Hofer-Zehnder capacity, which is a relative version of the  Hofer-Zehnder capacity
with respect to a coisotropic submanifold.
 Rongrong Jin and the second named author recently constructed a relative version of the Ekeland-Hofer capacity
with respect to a special class of coisotropic subspaces in \cite{JinLu1918}.
 Consider coisotropic subspaces of $(\mathbb{R}^{2n}, \omega_0)$,
$$
\mathbb{R}^{n,k}=\{x\in\mathbb{R}^{2n}|x=(q_1,\cdots,q_n,p_1,\cdots,p_k,0,\cdots,0)\},\quad k=0,\cdots,n.
$$
The isotropic leaf through $x\in \mathbb{R}^{n,k}$ is $x+V_0^{n,k}$, where
$$
V_0^{n,k}=\{x\in\mathbb{R}^{2n}\,|\,x=(0,\cdots,0,q_{k+1},\cdots,q_n,0,\cdots,0)\}.
$$
The leaf relation $\sim$ on $\mathbb{R}^{n,k}$ is that $x\sim y$ if and only if $y\in x+V_0^{n,k}$.
From now on we fix an integer $0\leqslant k<n$ and assume that $K\subset\mathbb{R}^{2n}$ is a compact convex  domain   with $C^{1,1}$-smooth boundary $\mathcal{S}=\partial K$
and satisfying ${\rm Int}(K)\cap \mathbb{R}^{n,k}\ne\emptyset$.
A nonconstant absolutely continuous curve $z:[0,T]\rightarrow\mathbb{R}^{2n}$ (for some $T>0$)
 is called  a \textsf{generalized leafwise chord} (abbreviated GLC) on $\mathcal{S}$ for $\mathbb{R}^{n,k}$
if  $z([0,T])\subset\mathcal{S}$, $\dot{z}(t)\in JN_{\mathcal{S}}(z(t))$ a.e., $z(0),z(T)\in\mathbb{R}^{n,k}$ and $z(0)-z(T)\in V_0^{n,k}$.
 The action $A(z)$ of such a chord is still defined by (\ref{e:action1}).
 In \cite{JinLu1917, JinLu1918} Rongrong Jin and the second named author
 proved respectively that the coisotropic Hofer-Zehnder capacity $c_{\rm LR}(K, K\cap\mathbb{R}^{n,k})$ of $K$ relative to $\mathbb{R}^{n,k}$
 and the coisotropic Ekeland-Hofer capacity $c^{n,k}(K)$ of $K$ relative to $\mathbb{R}^{n,k}$ satisfy
  \begin{eqnarray}\label{e:coiEHZ}
c_{\rm LR}(K, K\cap\mathbb{R}^{n,k})=c^{n,k}(K)=
\min\{A(x)>0\;|\;x\;\hbox{is a GLC on\;$\mathcal{S}$\;for \;$\mathbb{R}^{n,k}$}\}.
\end{eqnarray}
 Here is our second result.

\begin{theorem}\label{th:main2}
Let $K$ be a convex polytope as above (\ref{e:Comb formula}). Suppose $K\cap\mathbb{R}^{n,k}\neq\emptyset$.
 Then
 $$
c_{{\rm LR}}(K, K\cap\mathbb{R}^{n,k})=\frac{1}{2}\min_{((\beta_i)_{i=1}^{{\bf F}_K},\sigma)\in M(K)}\frac{1}{\sum_{1\leqslant
j<i\leqslant{\bf F}_K}
\beta_{\sigma(i)}\beta_{\sigma(j)}\omega_0(n_{\sigma(j)},n_{\sigma(i)})},
$$
 where
\begin{equation}\label{e:1.6}
M(K)=\left\{((\beta_i)_{i=1}^{{\bf F}_K},\sigma)\,\bigg|\,\begin{array}{ll}
&\beta_i\geqslant
0,\;\sum_{i=1}^{{\bf F}_K}\beta_ih_i=1,\;\sum_{i=1}^{{\bf F}_K}\beta_iJ n_i\in V_0^{n,k},\\
&\sum_{1\leqslant
j<i\leqslant{\bf F}_K}
\beta_{\sigma(i)}\beta_{\sigma(j)}\omega_0(n_{\sigma(j)},n_{\sigma(i)})>0,\;\sigma\in S_{{\bf F}_K}
\end{array}
\right\}.
\end{equation}
\end{theorem}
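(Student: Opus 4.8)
The plan is to run the argument used for Theorem~\ref{th:main} (and, beneath it, Haim-Kislev's proof of (\ref{e:Comb formula})), the structural simplification here being that $\mathbb{R}^{n,k}$ is coisotropic with isotropic leaves $x+V_0^{n,k}$, i.e.\ $V_0^{n,k}=(\mathbb{R}^{n,k})^{\omega_0}$, so that $\omega_0$ vanishes identically on $\mathbb{R}^{n,k}\times V_0^{n,k}$; this is exactly what makes the boundary term producing the correction $\omega_0(\Psi v,v)$ in Theorem~\ref{th:main} disappear in the present case. The first step is to make (\ref{e:coiEHZ}) available for the non-smooth polytope $K$: approximating $K$ in the Hausdorff metric by smooth convex bodies $K_j$ with ${\rm Int}(K_j)\cap\mathbb{R}^{n,k}\neq\emptyset$ and using the monotonicity and continuity of $c_{\rm LR}$ together with a $C^{0}$-compactness argument for bounded-action minimizing GLCs, one obtains
\[
c_{\rm LR}(K,K\cap\mathbb{R}^{n,k})=\min\bigl\{A(z)>0\mid z\ \text{is a GLC on}\ \partial K\ \text{for}\ \mathbb{R}^{n,k}\bigr\}.
\]
Since $\partial K$ is piecewise linear, the standard reduction then lets one take a minimizing GLC $z$ to be polygonal: it runs through facets $F_{i_1},\dots,F_{i_m}$, its $a$-th segment lying on $F_{i_a}$ and, as $\dot z\in JN_{\partial K}(z)=\mathbb{R}_{\geqslant 0}\,Jn_{i_a}$ along it, having displacement $w_a=d_a\,Jn_{i_a}$ with $d_a\geqslant 0$; moreover, as in Haim-Kislev's argument, each facet may be assumed used at most once.

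The core is a pair of identities for the action of such a chord. With $v_a=z(0)+\sum_{b\leqslant a}w_b$ the vertices, the relation $\langle -Jw_a,w_a\rangle=0$ gives, after a short computation,
\[
A(z)=\tfrac12\,\omega_0\!\left(z(0),\sum_a w_a\right)+\tfrac12\sum_{1\leqslant b<a}\omega_0(w_b,w_a),
\]
while integrating $\langle -J\dot z,z\rangle$ over the $a$-th segment, on which $\langle n_{i_a},z(t)\rangle\equiv h_{i_a}$, gives $A(z)=\tfrac12\sum_a d_a h_{i_a}$. The GLC conditions $z(0)\in\mathbb{R}^{n,k}$ and $z(0)-z(T)\in V_0^{n,k}$ force $\sum_a w_a=z(T)-z(0)\in V_0^{n,k}$, so $\omega_0\bigl(z(0),\sum_a w_a\bigr)\in\omega_0(\mathbb{R}^{n,k},V_0^{n,k})=\{0\}$ and drops out; using $\omega_0(Ju,Jv)=\omega_0(u,v)$ one then also has $A(z)=\tfrac12\sum_{b<a}d_bd_a\,\omega_0(n_{i_b},n_{i_a})$. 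Letting $\sigma\in S_{{\bf F}_K}$ be the permutation with $\sigma(l)=i_{m+1-l}$ for $l\leqslant m$ (extended arbitrarily over the unused facets) and putting $\beta_{i_a}:=d_a/(2A(z))$ and $\beta_i:=0$ for the unused $i$, one reads off $\beta_i\geqslant 0$, $\sum_i\beta_i h_i=1$, $\sum_i\beta_i\,J n_i=\bigl(z(T)-z(0)\bigr)/(2A(z))\in V_0^{n,k}$, and $\sum_{1\leqslant j<i\leqslant{\bf F}_K}\beta_{\sigma(i)}\beta_{\sigma(j)}\omega_0(n_{\sigma(i)},n_{\sigma(j)})=1/(2A(z))>0$. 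Hence $\bigl((\beta_i),\sigma\bigr)\in M(K)$ and $A(z)=\tfrac12\bigl[\sum_{j<i}\beta_{\sigma(i)}\beta_{\sigma(j)}\omega_0(n_{\sigma(i)},n_{\sigma(j)})\bigr]^{-1}$; taking the infimum over GLCs $z$ yields the inequality ``$\geqslant$''.

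For the reverse inequality one realizes (a reordering of) the optimal data by a GLC on $\partial K$. Given $\bigl((\beta_i),\sigma\bigr)\in M(K)$ with $P:=\sum_{j<i}\beta_{\sigma(i)}\beta_{\sigma(j)}\omega_0(n_{\sigma(i)},n_{\sigma(j)})>0$, form the polygonal path whose successive displacements are $(\beta_{\sigma(l)}/P)\,Jn_{\sigma(l)}$ over the facets with $\beta_{\sigma(l)}>0$, traversed in the order opposite to $\sigma$; its total displacement $(1/P)\sum_l\beta_{\sigma(l)}Jn_{\sigma(l)}$ lies in $V_0^{n,k}$. When $\sigma$ is chosen to maximize $P$, this polygon can be placed on $\partial K$ --- translating it so that each segment lies in the supporting hyperplane of its facet and, using $K\cap\mathbb{R}^{n,k}\neq\emptyset$, so that its first and last vertices lie in $\mathbb{R}^{n,k}$, the normalization $\sum_i\beta_i h_i=1$ fixing the scale --- and after discarding any portion that leaves $K$ (which only lowers the action) one obtains a GLC on $\partial K$ for $\mathbb{R}^{n,k}$ whose action, by the identities above, is $1/(2P)$. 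Minimizing $1/(2P)$ over $M(K)$ then gives ``$\leqslant$'', and the two inequalities prove the formula.

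The main obstacle is this last step: showing that the maximum of $\sum_{j<i}\beta_{\sigma(i)}\beta_{\sigma(j)}\omega_0(n_{\sigma(i)},n_{\sigma(j)})$ over $M(K)$ is actually realized by a GLC lying on $\partial K$, since a priori the polygon built from the displacements $\beta_{\sigma(l)}Jn_{\sigma(l)}$ need neither sit on $\partial K$ nor have its endpoints in $\mathbb{R}^{n,k}$; one must run the rearrangement-and-placement argument (in the spirit of Haim-Kislev's) while simultaneously respecting the leafwise-chord boundary conditions $z(0),z(T)\in\mathbb{R}^{n,k}$ and $z(0)-z(T)\in V_0^{n,k}$. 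A secondary but nontrivial point is the approximation step putting (\ref{e:coiEHZ}) at one's disposal for a polytope, together with the reduction of a minimizing GLC to a polygonal one that uses each facet at most once.
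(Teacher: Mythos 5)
Your computational identities are fine; in particular, the observation that the boundary term $\omega_0(z(0),z(T))$ vanishes because $z(T)-z(0)\in V_0^{n,k}$ and $\omega_0$ annihilates $\mathbb{R}^{n,k}\times V_0^{n,k}$ is exactly what replaces the correction term $\omega_0(\Psi v,v)$ of Theorem~\ref{th:main} here. But your route has a genuine gap, and you have located it yourself: you never prove the inequality $c_{\rm LR}(K,K\cap\mathbb{R}^{n,k})\leqslant \frac{1}{2}P^{-1}$ for the optimal $((\beta_i),\sigma)$, because ``placing the polygon on $\partial K$'' is not an argument. A single translation cannot put every segment into the supporting hyperplane of its own facet (this holds only for very special data), and ``discarding any portion that leaves $K$'' destroys the conditions $z(0),z(T)\in\mathbb{R}^{n,k}$ and $z(0)-z(T)\in V_0^{n,k}$ and is not shown to decrease the action. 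The same problem already infects your ``easy'' direction: you assert that a minimizing GLC may be taken polygonal, visiting each facet at most once, ``as in Haim-Kislev's argument,'' but those rearrangement surgeries (the analogues of Lemmas~\ref{lem:1} and~\ref{lem:2}) are performed on paths that are \emph{not} required to lie on $\partial K$, and they do not preserve that property; so they cannot be run inside the class of GLCs. That reduction is itself a theorem (Theorem~\ref{th:5.1} in the paper) and requires the variational framework, not just (\ref{e:coiEHZ}).

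The missing idea is the Clarke-duality type characterization imported from \cite{JinLu1917} (Theorem~\ref{th:cosi}): $c_{\rm LR}(K,K\cap\mathbb{R}^{n,k})=\min_{\mathcal{A}_2}I_2$, where $\mathcal{A}_2$ consists of paths satisfying the leafwise boundary conditions, the mean-value condition and $A=1$, but \emph{not} constrained to $\partial K$, and $I_2(x)=\int_0^1H_K^*(-J\dot x)$. With this, both directions become soft. Given any $((\beta_i),\sigma)\in M(K)$ one builds a piecewise affine test path with velocities $C\,p_{\sigma(i)}$ (translated within $\mathbb{R}^{n,k}$ to meet the mean-value condition, which changes neither the boundary conditions nor the action precisely because $\omega_0(\mathbb{R}^{n,k},V_0^{n,k})=0$, and scaled so that $A=1$); Propositions~\ref{prop:int} and~\ref{prop:1} give $I_2(z)=\frac{1}{2}\bigl(\sum_{j<i}\beta_{\sigma(i)}\beta_{\sigma(j)}\omega_0(n_{\sigma(i)},n_{\sigma(j)})\bigr)^{-1}\geqslant c_{\rm LR}$, with no placement on $\partial K$ ever needed. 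Conversely, the approximation and rearrangement lemmas of Section~\ref{sec:affine paths} are applied to a \emph{minimizer of $I_2$ in $\mathcal{A}_2$} (after the $P_{n,k}$-correction to restore membership in $\mathcal{A}_2$), producing a piecewise affine minimizer of the prescribed form, which yields the reverse inequality and, via Theorem~\ref{th:cosi}, the polygonal GLC on $\partial K$ only a posteriori. This also bypasses your separate smoothing/approximation step for (\ref{e:coiEHZ}) on the non-smooth polytope. As it stands, your proposal reproduces the easy bookkeeping but leaves unproved exactly the two steps that constitute the proof.
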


Unlike Ekeland-Hofer-Zehnder capacity, one cannot expect that the coisotropic Hofer-Zehnder
capacity satisfies the  subadditivity as stated in \cite[Theorem~1.8]{PH19} in general.
In fact, when $n=1$ and $k=0$, our following result is  opposite to the expected one.

\begin{theorem}\label{th:converse}
Let $D\subset\mathbb{R}^2$ be a convex domain satisfying $D\cap\mathbb{R}^{1,0}\neq\emptyset$,
and let $L\subset\mathbb{R}^{2}$ be a straight line through $D$
such that $L\neq\mathbb{R}^{1,0}$ and $D\cap L\cap\mathbb{R}^{1,0}\neq\emptyset$.
Denote by $D_1$ and $D_2$ the two parts divided by $L$. Then
\begin{equation}\label{e:1.7}
c_{\rm LR}(D, D\cap\mathbb{R}^{1,0})\geq c_{\rm LR}(D_1, D_1\cap\mathbb{R}^{1,0})+c_{\rm LR}(D_2, D_2\cap\mathbb{R}^{1,0}).
\end{equation}
\end{theorem}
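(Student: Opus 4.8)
The plan is to reduce (\ref{e:1.7}) to an elementary inequality between areas, by way of a two-dimensional formula for the coisotropic capacity. \textbf{Claim.} For every planar convex body $C$ with $\Int(C)\cap\R^{1,0}\neq\emptyset$,
\[
c_{\rm LR}(C,C\cap\R^{1,0})=\min\{\,|C\cap H^{+}|\,,\;|C\cap H^{-}|\,\},
\]
where $H^{\pm}=\{(q_1,p_1)\in\R^{2}\mid\pm p_1\geqslant0\}$ and $|\cdot|$ denotes area. I would prove the Claim as follows. For $n=1$ and $k=0$ one has $V_0^{1,0}=\R^{1,0}$, so the condition $z(0)-z(T)\in V_0^{1,0}$ is automatic once $z(0),z(T)\in\R^{1,0}$; hence, by (\ref{e:coiEHZ}) — or by Theorem~\ref{th:main2} when $C$ is a polygon — $c_{\rm LR}(C,C\cap\R^{1,0})$ is the least positive action of a nonconstant arc of $\partial C$ run in the characteristic direction (which for a convex body is the counterclockwise one) whose two endpoints lie in $\partial C\cap\R^{1,0}$. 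Since $\Int(C)\cap\R^{1,0}\neq\emptyset$, the set $\partial C\cap\R^{1,0}$ consists of exactly two points $a$ and $b$, which split $\partial C$ into an arc $\gamma^{+}$ lying in $H^{+}$ and an arc $\gamma^{-}$ lying in $H^{-}$; every admissible arc is a possibly iterated concatenation of $\gamma^{+}$ and $\gamma^{-}$, so all corresponding actions are nonnegative sums of $A(\gamma^{+})$ and $A(\gamma^{-})$. Closing $\gamma^{+}$ (resp. $\gamma^{-}$), run in the characteristic direction, by the chord $C\cap\R^{1,0}$ — on which $p_1\equiv0$, so $\tfrac12\int(q_1\,dp_1-p_1\,dq_1)$ vanishes — shows that $A(\gamma^{+})$ and $A(\gamma^{-})$ are, up to sign, the areas $|C\cap H^{+}|$ and $|C\cap H^{-}|$; since $A(\gamma^{+})+A(\gamma^{-})$ equals the action $+|C|=|C\cap H^{+}|+|C\cap H^{-}|>0$ of the full counterclockwise boundary loop, both must be positive, hence $A(\gamma^{\pm})=|C\cap H^{\pm}|$. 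Therefore the least positive admissible action is $\min\{|C\cap H^{+}|,|C\cap H^{-}|\}$, which proves the Claim for $C$ smooth or polygonal; for a general convex domain $C$ with $\Int(C)\cap\R^{1,0}\neq\emptyset$ it follows by Hausdorff approximation, using monotonicity and continuity of $c_{\rm LR}$ and continuity of area.

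Granting the Claim, Theorem~\ref{th:converse} follows at once. We may assume the point $D\cap L\cap\R^{1,0}$ lies in the relative interior of the segment $D\cap\R^{1,0}$ — exactly the situation in which all three capacities in (\ref{e:1.7}) are defined — so that $\Int(D)\cap\R^{1,0}$, $\Int(D_1)\cap\R^{1,0}$ and $\Int(D_2)\cap\R^{1,0}$ are all nonempty and the Claim applies to $D$, $D_1$ and $D_2$. Since $D=D_1\cup D_2$ with $D_1\cap D_2\subset L$ of zero area, $|D\cap H^{\pm}|=|D_1\cap H^{\pm}|+|D_2\cap H^{\pm}|$. Writing $x_i=|D_i\cap H^{+}|$ and $y_i=|D_i\cap H^{-}|$ for $i=1,2$, inequality (\ref{e:1.7}) becomes
\[
\min\{x_1+x_2,\;y_1+y_2\}\ \geqslant\ \min\{x_1,y_1\}+\min\{x_2,y_2\},
\]
which holds for arbitrary nonnegative reals because its right-hand side is at most $x_1+x_2$ and at most $y_1+y_2$.

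The main obstacle is the Claim. Concretely, one has to (i) verify that the generalized leafwise chords for $\R^{1,0}$ on $\partial C$ are precisely the counterclockwise boundary arcs between the two points of $\partial C\cap\R^{1,0}$ and their iterates, handling with care the condition $\dot z\in JN_{\partial C}(z)$ at non-smooth boundary points, where $N_{\partial C}$ is a normal cone, as well as the sign and orientation bookkeeping in the action integral; and (ii) pass from smooth convex bodies and convex polygons, where (\ref{e:coiEHZ}) and Theorem~\ref{th:main2} are available, to arbitrary planar convex domains, which requires monotonicity of $c_{\rm LR}$ under inclusions of pairs together with its continuity in the domain. Everything else is the one-line area identity above; it is exactly this identity — opposite in direction to the subadditivity of $c_{\rm EHZ}$ in \cite[Theorem~1.8]{PH19} — that produces (\ref{e:1.7}).
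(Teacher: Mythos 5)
Your proposal is correct and follows essentially the same route as the paper: identify $c_{\rm LR}$ of each planar convex piece with $\min\{{\rm Area}\ \text{above}\ \mathbb{R}^{1,0},{\rm Area}\ \text{below}\ \mathbb{R}^{1,0}\}$ by recognizing the only generalized leafwise chords as the two boundary arcs cut off by $\mathbb{R}^{1,0}$ and computing their actions via Stokes (the action form vanishing along the chord $D\cap\mathbb{R}^{1,0}$), then conclude with the same elementary inequality $\min\{x_1+x_2,y_1+y_2\}\geqslant\min\{x_1,y_1\}+\min\{x_2,y_2\}$. The only cosmetic difference is that the paper applies the GLC characterization directly to $D$, $D_1$, $D_2$ (after translating a point of $D\cap L\cap\mathbb{R}^{1,0}$ to the origin), whereas you route the nonsmooth case through an approximation argument.
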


\begin{remark}\label{rm:1}
{\rm Inequality (\ref{e:1.7}) is sharp, and it can be strict in some cases.
Consider the following example.
Let  $P=\{(x,y)\,|\,|x|\leqslant1, |y|\leqslant 1\}$ and $L=\{(x,x)\,|\, x\in\mathbb{R}$\}. Then $L$ divides $P$ into two parts  $P_1:=\{(x,y)\,|\, x\leqslant y\}\cap P$ and  $P_2:=\{(x,y)\,|\, x\geqslant y\}\cap P$.
Using Theorem~\ref{th:main2}, we can easily compute $c_{\rm LR}(P, P\cap\mathbb{R}^{1,0})=2$, $c_{\rm LR}(P_1, P_1\cap\mathbb{R}^{1,0})=c_{\rm LR}(P_2, P_2\cap\mathbb{R}^{1,0})=\frac{1}{2}$.
Thus
$$
c_{\rm LR}(P, P\cap\mathbb{R}^{1,0})>c_{\rm LR}(P_1, P_1\cap\mathbb{R}^{1,0})+c_{\rm LR}(P_2, P_2\cap\mathbb{R}^{1,0}).
$$
Moreover, for any $t\in (-1,1)$, the line $L_t:=\{(t,y)\,|\,y\in\mathbb{R}\}$
divides $P$ into two parts
$$
P_+:=\{(x,y)\in P\,|\, x\ge t\}\quad\hbox{and}\quad P_-:=\{(x,y)\in P\,|\, x\le t\}.
$$
It is easily computed that
$$
c_{\rm LR}(P_+, P_+\cap\mathbb{R}^{1,0})=1-t\quad\hbox{and}\quad c_{\rm LR}(P_-, P_-\cap\mathbb{R}^{1,0})=1+t,
$$
and hence $c_{\rm LR}(P_+, P_+\cap\mathbb{R}^{1,0})+ c_{\rm LR}(P_-, P_-\cap\mathbb{R}^{1,0})=c_{\rm LR}(P, P\cap\mathbb{R}^{1,0})$.

In higher dimensions, we have $c_{\rm LR}(G, G\cap\mathbb{R}^{n,n})=c_{\rm EHZ}(G)$ for any nonempty convex domain $G\subset\R^{2n}$.
Thus some coisotropic Hofer-Zehnder capacities of higher dimensions have subadditivity
because of the subadditivity of $c_{\rm EHZ}$ under the conditions of \cite[Theorem~1.8]{PH19}.
There is no nice result in more general case yet.
}
\end{remark}

For the symmetrical Hofer-Zehnder symplectic capacity of a symmetric convex domain in $\R^{2n}$ introduced by Liu and Wang \cite{LW12},
using a representation formula of it given by Rongrong Jin and the second named author in \cite{JinLu20}
one is able to generalize the formula in \cite{PH19}, but this is outside the scope of this paper
and would appear elsewhere.


This paper is organized as follows.
In the next section  we collect detailed conclusions coming from
 \cite[\S4.1]{JinLu1916} and \cite[\S3.1]{JinLu1917} about proofs of
representation formulas of the $\Psi$-Ekeland-Hofer-Zehnder capacity and
the coisotropic Ekeland-Hofer-Zehnder capacity for convex bodies in $\mathbb{R}^{2n}$, respectively.
Then we generalize some results on piecewise affine loops in \cite[\S 3]{PH19} to piecewise affine paths
in Section~\ref{sec:affine paths}.  Theorem~\ref{th:main} will be proved in Section~\ref{sec:main}.
Finally, in Section~\ref{sec:main-con} we prove
 Theorems~\ref{th:main2},~\ref{th:converse}.

\section{Preliminaries}\label{sec:pre}

For simplicity of the reader's convenience we list two results, which come from
\cite[Section~4.1]{JinLu1916} and \cite[Section~3.1]{JinLu1917}, respectively.

Let $K \subset \R^{2n}$ be a compact convex  domain  $K$ with  boundary $\mathcal{S}=\partial K$
and with $0\in{\rm Int}(K)$. Denote by  $H_K=(j_K)^2$ the square of the Minkowski functional $j_K$ of $K$,
 and by $H_K^*$ the Legendre transformation of $H_K$ defined by
$$
H_K^*(w)=\max_{\xi\in\mathbb{R}^{2n}}(\langle x,\xi\rangle-H_K(\xi)).
$$
 Then $h_K^2=4H_K^\ast$ (see e.g.\cite{AO14}).

Given  $\Psi\in{\rm Sp}(2n,\mathbb{R})$ let $E_{\Psi}$ be the orthogonal complement of ${\rm Ker}(\Psi-I_{2n})\subset\mathbb{R}^{2n}$
with respect to the standard  inner product in $\mathbb{R}^{2n}$. (In \cite{JinLu1916} we wrote ${\rm Ker}(\Psi-I_{2n})$ and $E_{\Psi}$ as $E_1$ and $E_1^{\bot}$,
respectively.) Define
 $$
 \mathcal{F}_\Psi=\{x\in W^{1,2}([0,1],\mathbb{R}^{2n}) \,|\,x(1)=\Psi x(0) \text{ and } x(0)\in E_\Psi\},
 $$
which was denoted by $\mathcal{F}$ in \cite{JinLu1916}.  If ${\rm dim}E_\Psi=0$, the problem reduces to the periodic case. So we only consider the non-periodic case in which ${\rm dim} E_\Psi\geqslant 1$. Define
$$
\mathcal{A}_\Psi=\{x\in\mathcal{F}_\Psi\,|\, A(x)=1\},
 $$
where $A(x)$ is defined by (\ref{e:action1}) with $T=1$, and
$$
I_K:\mathcal{F}_\Psi\rightarrow\mathbb{R},\; x\mapsto\int_0^1 H_K^*(-J\dot{x}).
$$
By Theorems~1.8, 1.9, Remark~1.10 and arguments in \cite[\S4.1]{JinLu1916} we have

\begin{thm}\label{th:convex}
Under the above assumptions, $I_K$ attains its minimum $\min_{x\in\mathcal{A}_\Psi}I_K(x)$ over $\mathcal{A}_\Psi$,
which is positive. For each minimier $u$ of $I_K$ over $\mathcal{A}_\Psi$,
there exists $a_0\in {\rm Ker}(\Psi-I_{2n})$  such that the $W^{1,2}$-path
\begin{equation}\label{e:repara1}
[0, I_K(u)]\ni t\mapsto x^*(t)=\sqrt{I_K(u)}u(t/I_K(u))+a_0/\sqrt{I_K(u)}
\end{equation}
satifies $A(x^{\ast})=I_K(u)=c^\Psi_{\rm EHZ}(K)$ and
\begin{equation}\label{e:repara2}
\left\{
   \begin{array}{l}
     -J\dot{x}^\ast(t)\in\partial {H}_K(x^\ast(t)), \;{\rm a.e.},\\
     x^\ast(T)=\Psi x^\ast(0)\quad\hbox{and}\quad x^\ast([0,T])\subset\partial K;
   \end{array}
   \right.
\end{equation}
in particular $x^{\ast}$ is a generalized $\Psi$-characteristic  on $\partial K$
because
\begin{equation}\label{e:subdifferential}
\partial {H}_K(x)=\{v\in N_{\partial K}(x)\,|\, \langle x,v\rangle=2\}\quad\forall x\in\partial K.
\end{equation}
(cf. Lemma~2 of \cite[Chap.V, \S1]{Ek90}).
Conversely, if $z:[0,T]\to \partial K$ is a generalized $\Psi$-characteristic  on $\partial K$ with action $A(z)=c^\Psi_{\rm EHZ}(K)$, then
(by \cite[Lemma~4.2]{JinLu1916}) there is a differentiable homeomorphism
$\varphi:[0, T]\to [0, T]$ with an absolutely continuous inverse
$\psi:[0, T]\to [0, T]$ such that $z^\ast=z\circ\varphi$
is a $W^{1,\infty}$-map with action $A(z^\ast)=A(z)=T$ and satisfying (\ref{e:repara2});
moreover we can choose $b\in{\rm Ker}(\Psi-I_{2n})$ so that the path $u:[0,1]\to\mathbb{R}^{2n}$ defined by $u(t)=z^\ast(Tt)/\sqrt{T}+ b$
belongs to  $\mathcal{A}_\Psi$ and satisfies $I_K(u)=T$, i.e., $u$ is a minimier $u$ of $I_K$ over $\mathcal{A}_\Psi$.
When this $K$ is also a convex polytope as above (\ref{e:Comb formula}),  then there holds
\begin{equation}\label{e:inclusion}
\dot{u}(t)=\sqrt{T}\dot{z}^\ast(Tt)\in \sqrt{T}{\rm conv}\{p_i\,|\, \sqrt{T}(u(t)-b)\in F_i\}, \;{\rm a.e.}
\end{equation}
where $p_i=\frac{2}{h_i}Jn_i$.
\end{thm}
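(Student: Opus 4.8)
\emph{Proof plan.} The analytic backbone is imported from \cite{JinLu1916}; the task here is to assemble it and to carry out the rescaling (\ref{e:repara1}) with all normalizations matched. First, since $0\in\Int(K)$ the Legendre transform $H_K^\ast$ grows quadratically from below, so $I_K$ is convex and coercive on the closed linear subspace $\mathcal{F}_\Psi$ intersected with $\{A(x)=1\}$; combined with the nondegeneracy and semicontinuity estimates of \cite[Theorems~1.8,~1.9 and Remark~1.10]{JinLu1916}, the direct method produces a minimizer $u\in\mathcal{A}_\Psi$ and gives $\min_{\mathcal{A}_\Psi}I_K>0$. Those same references identify this minimum with $c^\Psi_{\rm EH}(K)=c^\Psi_{\rm HZ}(K)=:c^\Psi_{\rm EHZ}(K)$, which by (\ref{e:PsiEHZ}) is the least action of a generalized $\Psi$-characteristic on $\partial K$; hence it suffices to manufacture from $u$ such a characteristic of action $I_K(u)$, and conversely from a minimal generalized $\Psi$-characteristic a minimizer of $I_K$.

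\emph{From a minimizer to a $\Psi$-characteristic.} I would write the stationarity condition for $\min\{I_K(x):A(x)=1,\ x\in\mathcal{F}_\Psi\}$: testing against $v\in\mathcal{F}_\Psi$ and integrating by parts (the boundary terms vanish because $\Psi$ is symplectic and $v(1)=\Psi v(0)$), one gets a multiplier $\mu$ and $w(t)\in\partial H_K^\ast(-J\dot u(t))$ with $w=\mu u+a_0$ for some $a_0\in{\rm Ker}(\Psi-I_{2n})$ (the kernel direction being unconstrained by $x(0)\in E_\Psi$, $x(1)=\Psi x(0)$); choosing $v=u$ and using Euler's identity for the degree-two homogeneous $H_K^\ast$ forces $\mu=I_K(u)=:T>0$. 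By Legendre reciprocity $H_K^{\ast\ast}=H_K$ this reads $-J\dot u(t)\in\partial H_K(Tu(t)+a_0)$ a.e.; substituting $x^\ast$ from (\ref{e:repara1}) and using $\partial H_K(\lambda w)=\lambda\,\partial H_K(w)$ for $\lambda>0$ yields the first line of (\ref{e:repara2}). The second line follows since $u(1)=\Psi u(0)$ and $\Psi a_0=a_0$ give $x^\ast(T)=\Psi x^\ast(0)$, while $H_K(x^\ast)$ is constant along $x^\ast$ (because $\langle -J\dot x^\ast,\dot x^\ast\rangle=0$) with average $1$ by Euler's identity $\langle w,v\rangle=2H_K(w)$ for $v\in\partial H_K(w)$ together with $\omega_0(a_0,(\Psi-I_{2n})u(0))=\omega_0(\Psi a_0,\Psi u(0))-\omega_0(a_0,u(0))=0$; this pins $x^\ast$ to $\partial K$ and simultaneously gives $A(x^\ast)=T$. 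Identity (\ref{e:subdifferential}) is then pure convex analysis: Euler's identity gives $\langle x,v\rangle=2H_K(x)=2$ for $v\in\partial H_K(x)$, $x\in\partial K$; the chain rule gives $\partial H_K(x)=2j_K(x)\,\partial j_K(x)=2\,\partial j_K(x)$; and $\partial j_K(x)=N_{\partial K}(x)\cap\{\langle x,\cdot\rangle=1\}$ is \cite[Chap.~V, \S1, Lemma~2]{Ek90}. Since $J^{-1}=-J$, the first line of (\ref{e:repara2}) becomes $\dot x^\ast(t)\in JN_{\partial K}(x^\ast(t))$, so $x^\ast$ is a generalized $\Psi$-characteristic, of action $c^\Psi_{\rm EHZ}(K)$ by the identification above.

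\emph{The converse and the polytopal refinement.} Given a generalized $\Psi$-characteristic $z$ on $\partial K$ with $A(z)=c^\Psi_{\rm EHZ}(K)$, \cite[Lemma~4.2]{JinLu1916} provides a differentiable homeomorphism $\varphi$ with absolutely continuous inverse such that $z^\ast=z\circ\varphi$ is $W^{1,\infty}$ and satisfies (\ref{e:repara2}); since $A$ is the integral of the $1$-form $\tfrac12\langle -Jy,dy\rangle$ it is unchanged, and Euler's identity forces $A(z^\ast)=A(z)=T$ (hence $T=c^\Psi_{\rm EHZ}(K)$, after reparametrizing $z$ to this period). With $E_\Psi={\rm Ker}(\Psi-I_{2n})^\perp$, take $b\in{\rm Ker}(\Psi-I_{2n})$ to be minus the orthogonal projection of $z^\ast(0)/\sqrt T$ onto ${\rm Ker}(\Psi-I_{2n})$; then $u(t)=z^\ast(Tt)/\sqrt T+b$ satisfies $u(0)\in E_\Psi$, $u(1)=\Psi u(0)$, and, reversing the rescaling computation above, $A(u)=1$ and $I_K(u)=T$ (here $H_K^\ast(-J\dot u)\equiv T$ by Fenchel's equality and $H_K(z^\ast)\equiv1$), so $u$ realizes the minimum. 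For a convex polytope $K$ one has $j_K(x)=\max_i\langle x,n_i\rangle/h_i$, so for $x\in\partial K$ the active facets are $\{i:\langle x,n_i\rangle=h_i\}=\{i:x\in F_i\}$ and the subdifferential of a finite maximum gives $\partial j_K(x)={\rm conv}\{n_i/h_i:x\in F_i\}$, hence $\partial H_K(x)={\rm conv}\{2n_i/h_i:x\in F_i\}$; applying $-J^{-1}=J$ to the first line of (\ref{e:repara2}) for $z^\ast$ gives $\dot z^\ast(s)\in{\rm conv}\{p_i:z^\ast(s)\in F_i\}$ with $p_i=\tfrac{2}{h_i}Jn_i$, and substituting $z^\ast(Tt)=\sqrt T(u(t)-b)$ and $\dot u(t)=\sqrt T\,\dot z^\ast(Tt)$ produces (\ref{e:inclusion}). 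The genuinely delicate step is keeping the three rescaling data $\sqrt{I_K(u)}$, $I_K(u)$ and $a_0$ consistent with membership of $x^\ast$ in $\partial K$, the twisted periodicity $x^\ast(T)=\Psi x^\ast(0)$, and the value $A(x^\ast)=I_K(u)$ all at once — elementary, but the spot where sign and normalization errors would creep in; the rest is citation plus standard convex calculus.
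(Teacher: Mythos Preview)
Your proposal is correct and follows essentially the same route as the paper. In the paper this theorem is not proved in detail but is declared to follow from \cite[Theorems~1.8,~1.9, Remark~1.10 and \S4.1]{JinLu1916}; you have simply unpacked that citation---the direct-method existence, the Lagrange-multiplier/Clarke-duality derivation of the Euler--Lagrange inclusion, and the rescaling to land on $\partial K$---and your treatment of the polytopal refinement (computing $\partial H_K(x)$ on $\partial K$ via the max-formula $j_K(x)=\max_i\langle x,n_i\rangle/h_i$) is the same convex-analysis fact the paper records just after the theorem by citing \cite[p.~445]{PH19}.
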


  In order to see the final claim, note that for each $i=1,\cdots,{\bf F}_{K}$, $H_K$ is smooth at each relative interior point $x$ of $F_i$
and the subdifferential $\partial H_K(x)=\{\nabla H_K(x)\}=\{\frac{2}{h_i}n_i\}$.
For any $x\in\partial K$ we have $\partial H_K(x)={\rm conv}\{\frac{2}{h_i}n_i\,|\, x\in F_i\}$
(cf. \cite[page 445]{PH19}),  and therefore $J\partial H_K(x)={\rm conv}\{p_i\,|\, x\in F_i\}$.
(The outward normal cone of $K$ at $x\in\partial K$,  $N_{\partial K}(x)$,
  is equal to $\mathbb{R}_+{\rm conv}\{n_i:x\in F_i\}$.)

Fix an integer $0\le k<n$. Following \cite{JinLu1917} consider  the Hilbert subspace of $W^{1,2}([0,1],\mathbb{R}^{2n})$,
$$
\mathscr{F}_2:=\left\{x\in W^{1,2}([0,1],\mathbb{R}^{2n})\,\Big|\,x(0), x(1)\in\mathbb{R}^{n,k},\;x(1)\sim x(0),\;\int_0^1x(t)dt\in JV_0^{n,k}\right\}
$$
(where $x(1)\sim x(0)$ means $x(1)-x(0)\in V^{n,k}_0$), its subset
$\mathcal{A}_2 =\{x\in \mathscr{F}_2\,|\,A(x)=1 \}$,
and the related convex functional
$$
I_2:\mathscr{F}_2\to\mathbb{R},\;x\mapsto \int_0^1 H_K^{\ast}(-J\dot{x}(t))dt.
$$

From \cite[\S3.1]{JinLu1917}, we obtain the following corresponding result of Theorem~\ref{th:convex}.

\begin{thm}\label{th:cosi}
Under the above assumptions, $I_2$ attains its minimum $\min_{x\in\mathcal{A}_2}I_2(x)$ over $\mathcal{A}_2$,
which is positive. For each minimier $u$ of $I_2$ over $\mathcal{A}_2$, there exists
${\bf a}_0\in \mathbb{R}^{n,k}$ such that the $W^{1,2}$-path
\begin{eqnarray}\label{e:action3}
[0, 1]\ni t\mapsto x^*(t):=\sqrt{I_2(u)}u(t)+ {\bf a}_0/\sqrt{I_2(u)}
\end{eqnarray}
satisfies $A(x^{\ast})=I_2(u)=c_{{\rm LR}}(K, K\cap\mathbb{R}^{n,k})=c^{n,k}(K)$ and
\begin{equation}\label{e:repara3}
\left\{
   \begin{array}{l}
     -J\dot{x}^\ast(t)=\partial {H}_K(x^\ast(t)), \;{\rm a.e.},\;x^\ast(0), x^\ast(1)\in \mathbb{R}^{n,k},\\
     x^\ast(1)-x^\ast(0)\in V_0^{n,k}\quad\hbox{and}\quad x^\ast([0,1])\subset\partial K;
   \end{array}
   \right.
\end{equation}
in particular $x^\ast$ is a generalized leafwise chord   on $\partial K$  for $\mathbb{R}^{n,k}$
because of (\ref{e:subdifferential}).
Conversely, if $z:[0,T]\to \partial K$ is a generalized leafwise chord  on $\partial K$ with action $A(z)=c^{n,k}(K)$  for $\mathbb{R}^{n,k}$, then
(by \cite[Lemma~4.2]{JinLu1916}) there is a differentiable homeomorphism
$\varphi:[0, T]\to [0, T]$ with an absolutely continuous inverse
$\psi:[0, T]\to [0, T]$ such that $z^\ast=z\circ\varphi$
is a $W^{1,\infty}$-map with action $A(z^\ast)=A(z)=T$ and satisfying
\begin{equation}\label{e:repara4}
\left\{
   \begin{array}{l}
     -J\dot{z}^\ast(t)=\partial {H}_K(z^\ast(t)), \;{\rm a.e.},\;z^\ast(0), z^\ast(T)\in \mathbb{R}^{n,k},\\
     z^\ast(T)-z^\ast(0)\in V_0^{n,k}\quad\hbox{and}\quad z^\ast([0,T])\subset\partial K;
   \end{array}
   \right.
\end{equation}
moreover the path $u:[0,1]\to\mathbb{R}^{2n}$ defined by
\begin{equation}\label{e:repara5}
u(t)=\frac{1}{\sqrt{T}}z^\ast(Tt)- \frac{1}{\sqrt{T}}P_{n,k}\int^1_0 z^\ast(Tt)dt
\end{equation}
where  $P_{n,k}:\mathbb{R}^{2n}=JV^{n,k}_0\oplus \mathbb{R}^{n,k}\to\mathbb{R}^{n,k}$
is the orthogonal projection, belongs to  $\mathcal{A}_2$ and satisfies $I_2(u)=T$, i.e., $u$ is a
minimier $u$ of $I_2$ over $\mathcal{A}_2$.
When this $K$ is also a convex polytope as above (\ref{e:Comb formula}), there holds
$$
\dot{u}(t)=\sqrt{T}\dot{z}^\ast(Tt)\in \sqrt{T}{\rm conv}\{p_i\,|\, \sqrt{T}(u(t)-b)\in F_i\}, \;{\rm a.e.}
$$
where $p_i=\frac{2}{h_i}Jn_i$ and $b=- \frac{1}{\sqrt{T}}P_{n,k}\int^1_0 z^\ast(Tt)dt$.
\end{thm}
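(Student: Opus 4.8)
The plan is to obtain this coisotropic statement exactly as its model Theorem~\ref{th:convex} is obtained from \cite[\S4.1]{JinLu1916}: by applying the Clarke--Ekeland dual least-action principle to the convex functional $I_2$ restricted to the constraint set $\mathcal{A}_2$, following \cite[\S3.1]{JinLu1917} (with \cite{JinLu1918} giving the identity $c_{\rm LR}=c^{n,k}$), and then reading off the polytope inclusion from the description of $\partial H_K$ recalled after Theorem~\ref{th:convex}. Since the dual-principle part is a specialization of \cite{JinLu1917}, I will indicate its structure rather than reproduce the estimates; the only genuinely new ingredient is the final inclusion.

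First I would record the analytic inputs. Because $0\in\Int(K)$ and $K$ is compact, the Minkowski functional $j_K$ is equivalent to the Euclidean norm, hence $H_K=(j_K)^2$ and its Legendre transform $H_K^\ast$ are both comparable to $|\cdot|^2$, so $I_2(x)\ge{\rm const}\cdot\|\dot x\|_{L^2}^2$. On the closed linear subspace $\mathscr{F}_2\subset W^{1,2}$, the constraint $\int_0^1x\in JV_0^{n,k}$ together with $x(0)\in\mathbb{R}^{n,k}$ and the orthogonal splitting $\mathbb{R}^{2n}=\mathbb{R}^{n,k}\oplus JV_0^{n,k}$ determines $x(0)$ in terms of $\dot x$, so $\|x\|_{W^{1,2}}$ is controlled by $\|\dot x\|_{L^2}$ on $\mathscr{F}_2$; thus $I_2$ is coercive and weakly lower semicontinuous there, while $A(x)=\tfrac12\int_0^1\langle -J\dot x,x\rangle$ is weakly continuous on bounded sets (weak $L^2$ convergence of $\dot x_n$ against $C^0$ convergence of $x_n$). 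Consequently a minimizing sequence for $I_2|_{\mathcal{A}_2}$ is $W^{1,2}$-bounded, a weak subsequential limit lies in $\mathscr{F}_2$ and still satisfies $A=1$, and $I_2$ attains its minimum at some $u\in\mathcal{A}_2$; positivity of $I_2(u)$ follows since $\dot u\equiv0$ would force $A(u)=0\ne1$.

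Next I would run the Euler--Lagrange step. Convexity of $H_K^\ast$ and the Legendre reciprocity $w\in\partial H_K(y)\Leftrightarrow y\in\partial H_K^\ast(w)$ turn the criticality of $u$ (with a Lagrange multiplier coming from the normalization $A=1$) into a Hamiltonian inclusion for $-J\dot u$; rescaling by $\sqrt{I_2(u)}$ and translating by a suitable ${\bf a}_0\in\mathbb{R}^{n,k}$ --- a translation preserving $x(0),x(1)\in\mathbb{R}^{n,k}$ and the leaf condition, chosen so the image lands on $\partial K=\{H_K=1\}$ --- yields the path $x^\ast$ of (\ref{e:repara3}), which by (\ref{e:subdifferential}) is a GLC; its action equals $I_2(u)$ because the cross term in $A(\sqrt{I_2(u)}\,u+{\bf a}_0/\sqrt{I_2(u)})$ pairs $-J(u(1)-u(0))\in JV_0^{n,k}$ against ${\bf a}_0\in\mathbb{R}^{n,k}$ and hence vanishes. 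For the converse I would start from a GLC $z:[0,T]\to\partial K$ with $A(z)=c^{n,k}(K)$, reparametrize it by \cite[Lemma~4.2]{JinLu1916} to $z^\ast=z\circ\varphi$ with $-J\dot z^\ast\in\partial H_K(z^\ast)$ a.e. (keeping the action and the endpoint/leaf conditions), and set $u(t)=\tfrac{1}{\sqrt T}z^\ast(Tt)-\tfrac{1}{\sqrt T}P_{n,k}\int_0^1z^\ast(Ts)\,ds$; the subtracted constant forces $\int_0^1u\in JV_0^{n,k}$, one checks $A(u)=1$, and using $H_K^\ast(-J\dot z^\ast)=\langle z^\ast,-J\dot z^\ast\rangle-H_K(z^\ast)=2-1=1$ with the $2$-homogeneity of $H_K^\ast$ one gets $I_2(u)=T$. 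Comparing the two directions gives $\min_{\mathcal{A}_2}I_2=\min\{A(x):x\text{ a GLC on }\partial K\}$, which equals $c_{\rm LR}(K,K\cap\mathbb{R}^{n,k})=c^{n,k}(K)$ by (\ref{e:coiEHZ}); in particular the $u$ just constructed is a minimizer.

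Finally, for a convex polytope I would use that $\partial H_K(x)={\rm conv}\{\tfrac{2}{h_i}n_i:x\in F_i\}$ for $x\in\partial K$, whence $J\partial H_K(x)={\rm conv}\{p_i:x\in F_i\}$ with $p_i=\tfrac{2}{h_i}Jn_i$; combining $\dot z^\ast\in J\partial H_K(z^\ast)$ with $\dot u(t)=\sqrt T\,\dot z^\ast(Tt)$ and $z^\ast(Tt)=\sqrt T(u(t)-b)$, $b=-\tfrac{1}{\sqrt T}P_{n,k}\int_0^1z^\ast$, gives the stated inclusion. The step I expect to be genuinely delicate is the Euler--Lagrange/regularity argument behind \cite[\S3.1]{JinLu1917}: showing that a minimizer of the non-smooth dual functional actually solves the Hamiltonian inclusion with the correct boundary behavior and that the required ${\bf a}_0\in\mathbb{R}^{n,k}$ exists, together with the reparametrization \cite[Lemma~4.2]{JinLu1916} in the polytope case where $H_K$ has corners; granting those, the polytope inclusion itself is routine.
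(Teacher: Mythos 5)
Your proposal is correct and follows essentially the same route as the paper: Theorem~\ref{th:cosi} is obtained there by specializing the variational (dual action) arguments of \cite[\S3.1]{JinLu1917} together with the reparametrization lemma \cite[Lemma~4.2]{JinLu1916}, exactly as you outline, with the final polytope inclusion read off from the description $J\partial H_K(x)={\rm conv}\{p_i\,|\,x\in F_i\}$ recalled below Theorem~\ref{th:convex}. The details you fill in (coercivity of $I_2$ on $\mathscr{F}_2$, vanishing of the cross terms via the orthogonal splitting $\mathbb{R}^{2n}=JV_0^{n,k}\oplus\mathbb{R}^{n,k}$, and the Fenchel-equality computation giving $I_2(u)=T$) are consistent with the cited arguments.
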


The final claim is obtained as below Theorem~\ref{th:convex}.

\section{Piecewise affine paths}\label{sec:affine paths}

In this section we will generalize some results on piecewise affine loops in \cite[\S 3]{PH19} to piecewise affine paths.

Recall in \cite[Definition~3.2]{PH19} that a finite sequence of disjoint open intervals $(I_i)_{i=1}^m$ is called a \textsf{partition} of $[0,1]$
if there exists an increasing sequence of numbers $0 = \tau_0 \leq \tau_1 \leq \ldots \leq \tau_m = 1$ with $I_i = (\tau_{i-1},\tau_i)$.
(Note that the open interval $I_i$ may be empty!)
As usual let $\chi_I$ denote the characteristic function of a subset $I\subset\mathbb{R}$.
   A path $z\in H^1([0,1],\mathbb{R}^{2n})$ is said to be \textsf{piecewise affine} if $\dot{z}$ can be written as $\dot{z}(t) = \sum_{j=1}^m \chi_{I_j}(t) w_j$ for almost every $t \in [0,1]$, where $(I_j)_{j=1}^m$ is a partition of $[0,1]$ and $(w_j)_{j=1}^m\in\mathbb{R}^{2n}$ is a finite sequence of vectors.

\begin{lemma}[\hbox{\cite[Lemma~3.1]{PH19}}]\label{lem:PH3.1}
Fix a set of vectors $v_1,\cdots,v_k\in\mathbb{R}^{2n}$. Suppose $z\in H^1([0,1],\mathbb{R}^{2n})$ satisfies that for almost every $t\in[0,1]$, one has $\dot{z}(t)\in{\rm conv}\{v_1,\cdots,v_k\}$. Then for every $\varepsilon>0$, there exists a piecewise affine path $\varsigma$ with $\parallel z-\varsigma\parallel_{W^{1,2}}<\varepsilon$, and so that $\dot{\varsigma}$ is composed of vectors from the set ${\rm conv}\{v_1,\cdots,v_k\}$, and $\varsigma(0)=z(0), \varsigma(1)=z(1)$.
\end{lemma}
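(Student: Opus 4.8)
The plan is to approximate $z$ by the primitive of a step function obtained from local averages of $\dot z$. The key point is that replacing $\dot z$ by its averages over the pieces of a fine partition automatically keeps the new derivative inside the convex hull (by convexity) and preserves the total integral $\int_0^1\dot z$ (by additivity of the integral), so that the endpoint condition $\varsigma(1)=z(1)$ is met \emph{exactly}; only the quantitative $W^{1,2}$-closeness then remains, and that is a routine $L^2$-approximation estimate.

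In detail, first note that $C:={\rm conv}\{v_1,\dots,v_k\}$ is compact, so the hypothesis $\dot z(t)\in C$ a.e. gives $\dot z\in L^\infty([0,1],\mathbb{R}^{2n})\subset L^2([0,1],\mathbb{R}^{2n})$. For $N\in\mathbb{N}$ let $J^N_\ell=((\ell-1)/N,\ell/N)$, $\ell=1,\dots,N$, which is a partition of $[0,1]$ in the sense recalled above, and set
$$
w^N_\ell:=N\int_{J^N_\ell}\dot z(s)\,ds,\qquad g_N:=\sum_{\ell=1}^N\chi_{J^N_\ell}\,w^N_\ell,\qquad \varsigma_N(t):=z(0)+\int_0^t g_N(s)\,ds .
$$
Each $w^N_\ell$ lies in $C$: otherwise a hyperplane separates $w^N_\ell$ from $C$, say $\langle a,x\rangle\le b$ for all $x\in C$, and then $\langle a,w^N_\ell\rangle=N\int_{J^N_\ell}\langle a,\dot z(s)\rangle\,ds\le b$, a contradiction. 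Hence $\varsigma_N$ is piecewise affine with $\dot\varsigma_N=g_N$ composed of vectors from $C$, and $\varsigma_N(0)=z(0)$. Moreover $\int_0^1 g_N=\sum_\ell\int_{J^N_\ell}\dot z=\int_0^1\dot z$, so $\varsigma_N(1)=z(0)+\int_0^1\dot z=z(1)$.

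It therefore suffices to prove $g_N\to\dot z$ in $L^2$. Granting this, since $z(t)-\varsigma_N(t)=\int_0^t(\dot z-g_N)$ one gets $\|z-\varsigma_N\|_{L^\infty}\le\|\dot z-g_N\|_{L^1}\le\|\dot z-g_N\|_{L^2}$, hence $\|z-\varsigma_N\|_{W^{1,2}}\le\sqrt2\,\|\dot z-g_N\|_{L^2}\to 0$, and any sufficiently large $N$ provides the required $\varsigma=\varsigma_N$. The $L^2$-convergence $g_N\to\dot z$ is standard and can be obtained, for instance, by observing that the averaging operator $E_N\colon f\mapsto\sum_\ell\chi_{J^N_\ell}\,N\int_{J^N_\ell}f$ has $L^2$-operator norm $\le 1$ (Cauchy--Schwarz on each $J^N_\ell$) and satisfies $E_Nf\to f$ uniformly for continuous $f$ (uniform continuity), whence a density/$3\varepsilon$ argument yields $E_Ng\to g$ in $L^2$ for every $g\in L^2$; alternatively, the Lebesgue differentiation theorem gives $g_N(t)\to\dot z(t)$ for a.e.\ $t$, and the bound $|g_N|\le 2\|\dot z\|_{L^\infty}$ lets one conclude by dominated convergence.

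The only genuine issue is meeting the two constraints — $\dot\varsigma\in{\rm conv}\{v_1,\dots,v_k\}$ a.e.\ and $\varsigma(1)=z(1)$ — simultaneously and \emph{exactly}, not just approximately; a plain mollification of $\dot z$ would respect neither in general. Using piecewise averages repairs both at no cost (convexity for the first, additivity of the integral for the second), after which only the elementary $L^2$-approximation estimate above is needed, so I do not anticipate any further obstacle.
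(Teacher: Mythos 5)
Your proof is correct, and it is worth noting that the paper itself does not prove this lemma at all: it is imported verbatim as \cite[Lemma~3.1]{PH19}, so there is no in-paper argument to compare against. Your argument is a complete, self-contained substitute, and it handles the two delicate constraints in the cleanest possible way: replacing $\dot z$ by its averages over a uniform partition (a conditional-expectation/martingale-type approximation) keeps the values in $\mathrm{conv}\{v_1,\dots,v_k\}$ automatically by convexity (your separation argument, or simply the fact that an average of an a.e.\ $C$-valued function over a set of positive measure lies in the closed convex set $C$), and it preserves $\int_0^1\dot z$ exactly, so $\varsigma_N(1)=z(1)$ holds on the nose rather than only approximately --- which is precisely the point where a naive mollification or generic $L^2$-approximation by step functions would need an extra correction step. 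The remaining ingredients are all standard and correctly handled: $w^N_\ell\in C$, $\dot\varsigma_N$ piecewise constant on a partition in the sense used in Section~3, the contraction property $\|E_N\|_{L^2\to L^2}\le 1$ plus density (or Lebesgue differentiation and dominated convergence, which also applies since the partition intervals containing $t$ shrink nicely) giving $\|\dot z-\dot\varsigma_N\|_{L^2}\to0$, and the elementary estimate $\|z-\varsigma_N\|_{L^\infty}\le\|\dot z-\dot\varsigma_N\|_{L^1}\le\|\dot z-\dot\varsigma_N\|_{L^2}$ upgrading this to $W^{1,2}$-convergence. (Only a cosmetic remark: $|g_N|\le\|\dot z\|_{L^\infty}$ already holds, so the factor $2$ in your domination bound is unnecessary, though harmless.) Compared with the original approximation argument in \cite{PH19}, which is formulated for loops and then adapted here to paths with fixed endpoints, your averaging construction arguably buys exact endpoint matching and convex-hull membership for free, at the cost of no additional machinery.
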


The following is an analouge of \cite[Proposition~3.3]{PH19}.

\begin{proposition}\label{prop:int}
If a path $z \in H^1([0,1],\mathbb{R}^{2n})$ is such that $\dot{z}(t) = \sum_{i=1}^m \chi_{I_i}(t) w_i$ almost everywhere, where $\left(I_i = (\tau_{i-1},\tau_i) \right)_{i=1}^m$  is a partition of $[0,1]$, and $w_1,\cdots,w_m \in \mathbb{R}^{2n}$, then
\begin{equation}\label{e:3.1}
\int_0^1 \langle -J \dot{z}, z \rangle dt = \sum_{i=1}^m \sum_{j = 1}^{i-1} |I_j| |I_i| \omega_0(w_j,w_i)+\omega_0(z(0),z(1)).
\end{equation}
As usual $\sum_{j = 1}^{i-1} |I_j| |I_i| \omega_0(w_j,w_i)$ for $i=1$ is understood as zero.
\end{proposition}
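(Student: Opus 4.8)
The proof is a direct computation, and the plan is to carry out the integral on the left of (\ref{e:3.1}) piece by piece over the partition. First I would record the explicit shape of $z$ on each subinterval: since $\dot z(t)=\sum_{i=1}^m\chi_{I_i}(t)w_i$ and $z(0)$ is prescribed, for $t\in I_i=(\tau_{i-1},\tau_i)$ one has $z(t)=\zeta_i+(t-\tau_{i-1})w_i$, where $\zeta_i:=z(\tau_{i-1})=z(0)+\sum_{j=1}^{i-1}|I_j|w_j$ is the value of $z$ at the left endpoint of $I_i$. (An empty interval has $|I_i|=0$ and contributes nothing, consistent with the stated convention.) Substituting this into the integrand gives $\int_0^1\langle-J\dot z,z\rangle\,dt=\sum_{i=1}^m\int_{I_i}\langle-Jw_i,\zeta_i+(t-\tau_{i-1})w_i\rangle\,dt$.

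Next I would evaluate each summand. The part of the integrand proportional to $t-\tau_{i-1}$ is $(t-\tau_{i-1})\langle-Jw_i,w_i\rangle$, and $\langle-Jw_i,w_i\rangle=0$ because the bilinear form $(u,v)\mapsto\langle-Ju,v\rangle$ is skew-symmetric; hence only the constant term survives, $\int_{I_i}\langle-J\dot z,z\rangle\,dt=|I_i|\langle-Jw_i,\zeta_i\rangle$. Expanding $\zeta_i$ yields $|I_i|\langle-Jw_i,z(0)\rangle+\sum_{j=1}^{i-1}|I_j||I_i|\,\omega_0(w_i,w_j)$, so summing over $i=1,\dots,m$ already produces the double sum $\sum_{i}\sum_{j<i}|I_j||I_i|\omega_0(w_i,w_j)$ of (\ref{e:3.1}) together with the leftover $\sum_{i=1}^m|I_i|\langle-Jw_i,z(0)\rangle$.

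It then remains to identify that leftover with the boundary term. Pulling the finite sum into the first slot and using $\sum_{i=1}^m|I_i|w_i=\int_0^1\dot z(t)\,dt=z(1)-z(0)$ rewrites it as $\langle-J(z(1)-z(0)),z(0)\rangle$; skew-symmetry again annihilates the $\langle-Jz(0),z(0)\rangle$ piece, leaving $\langle-Jz(1),z(0)\rangle$, which is $\omega_0(z(0),z(1))$ in the paper's convention for $\omega_0$. Collecting the two contributions gives (\ref{e:3.1}). As a sanity check, when $z$ is a loop ($z(0)=z(1)$) the boundary term vanishes and one recovers \cite[Proposition~3.3]{PH19}.

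There is no genuine difficulty here: this is purely a bookkeeping identity. The two points that need a little care are (i) correctly tracking the accumulated constant $\zeta_i$ of $z$ on the $i$-th interval, since this is exactly what makes the inner index run over $j<i$ rather than $j\le i$, and (ii) invoking the skew-symmetry of $\langle-J\cdot,\cdot\rangle$ at the right places — to kill the $\langle-Jw_i,w_i\rangle$ and $\langle-Jz(0),z(0)\rangle$ terms, and to match the order of the arguments of $\omega_0$ used elsewhere in the paper.
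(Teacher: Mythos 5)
Your computation is correct and is essentially the paper's own proof: both expand $z(t)=z(0)+\int_0^t\dot z$ over the partition, kill the self-term $\langle -Jw_i,w_i\rangle$ by skew-symmetry, and identify the contribution of $z(0)$ with $\omega_0(z(0),z(1))$ via $\sum_i|I_i|w_i=z(1)-z(0)$. The only (immaterial) difference is that the paper separates the $\langle -J\dot z,z(0)\rangle$ term before splitting the integral, while you carry $z(0)$ inside the constants $\zeta_i$ and extract the boundary term at the end.
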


\begin{proof}
The case $m=1$ is clear. Now we assume $m>1$. Since
$$
\int_0^1\langle -J\dot{z}(t),z(0)\rangle dt=-\langle J{z}(1),z(0)\rangle=-\omega_0(J{z}(1), Jz(0))=\omega_0(z(0), z(1))
$$
we deduce
\begin{eqnarray*}
\int_0^1 \langle -J \dot{z}, z \rangle dt  &=& \int_0^1 \langle -J \dot{z} , z(0) + \int_0^t \dot{z}(s) ds \rangle dt \\
 &=&\int_0^1\langle -J\dot{z}(t),z(0)\rangle dt \\
 &&+ \sum^m_{i=1}\int_{I_i}
  \langle -J \sum_{l=1}^m  \chi_{I_l}(t) w_l , \int_0^{\tau_{i-1}} \sum_{l=1}^m \chi_{I_l}(s) w_l ds + \int_{\tau_{i-1}}^t w_i ds \rangle dt \\
 &=& \omega_0(z(0),z(1))+\sum_{i=1}^m \int_{I_i} \langle -J w_i , \sum_{j<i}\int_{I_j} \sum_{l=1}^m \chi_{I_l}(s)w_l ds + (t-\tau_{i-1})w_i \rangle dt \\
 &=& \omega_0(z(0),z(1))+\sum_{i=1}^m \int_{I_i} \langle -J w_i , \sum_{j<i} \int_{I_j} w_j ds \rangle dt\\
 & =& \omega_0(z(0),z(1))+\sum_{i=1}^m \sum_{j<i}|I_i| |I_j| \omega_0(w_j,w_i).
\end{eqnarray*}
\end{proof}

Following the proof ideas of \cite[Lemma~3.1]{PH19} we can obtain:

\begin{lemma}\label{lem:1}
Given a set of vectors, $v_1, \ldots, v_k \in \mathbb{R}^{2n}$, for any piecewise affine path $z \in H^1([0,1],\R^{2n})$ with
 $\dot{z}(t) \in \text{conv}\{v_1,\ldots,v_{{k}}\}$ for almost every $t \in [0,1]$,  there exists another piecewise affine path $z' \in H^1([0,1],\R^{2n})$ so that $z'(0)=z(0),z'(1)=z(1),\dot{z}'(t) \in \{ v_1,\ldots,v_{{k}}\}$ for almost every $t$, and
$$ \int_0^1 \langle -J \dot{z}',z' \rangle dt \geq \int_0^1 \langle -J \dot{z}, z \rangle dt .$$
\end{lemma}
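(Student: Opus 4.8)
\textbf{Proof proposal for Lemma~\ref{lem:1}.}

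The plan is to follow the ``bubbling'' argument of \cite[Lemma~3.1]{PH19}, adapted to paths with fixed (and in general distinct) endpoints. Write $\dot z(t)=\sum_{i=1}^m\chi_{I_i}(t)w_i$ with $(I_i=(\tau_{i-1},\tau_i))_{i=1}^m$ a partition of $[0,1]$ and each $w_i\in{\rm conv}\{v_1,\dots,v_k\}$; say $w_i=\sum_{\ell=1}^k\lambda_{i\ell}v_\ell$ with $\lambda_{i\ell}\ge 0$, $\sum_\ell\lambda_{i\ell}=1$. First I would record, via Proposition~\ref{prop:int}, that the action functional $\mathcal Q(z):=\int_0^1\langle-J\dot z,z\rangle\,dt$ equals $\sum_{i}\sum_{j<i}|I_i||I_j|\omega_0(w_i,w_j)+\omega_0(z(0),z(1))$. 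Since the endpoint term $\omega_0(z(0),z(1))$ depends only on $z(0)=z'(0)$ and $z(1)=z'(1)$, it is a constant throughout the construction, so it suffices to maximize the purely combinatorial quantity $\sum_{i}\sum_{j<i}|I_i||I_j|\omega_0(w_i,w_j)$ subject to $\sum_i|I_i|w_i$ being fixed (this is forced: $\sum_i|I_i|w_i=z(1)-z(0)$) and then realize the maximizer by a piecewise affine path with the same endpoints.

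The key steps, in order: (1) Refine the partition by splitting each interval $I_i$ according to the barycentric weights $\lambda_{i\ell}$, replacing the block with velocity $w_i$ by $k$ consecutive sub-blocks with velocities $v_1,\dots,v_k$ of lengths $\lambda_{i1}|I_i|,\dots,\lambda_{ik}|I_i|$; this does not change $z(0)$, $z(1)$, or $\sum_i|I_i|w_i$, and one checks using bilinearity and antisymmetry of $\omega_0$ (exactly as in \cite[\S3]{PH19}) that it does not decrease $\mathcal Q$ — in fact a short convexity computation shows the new value is $\ge$ the old one because replacing $w_i$ by a convex combination and ``spreading out'' only helps. So we may assume each velocity lies in the finite set $\{v_1,\dots,v_k\}$. (2) Now group all sub-blocks carrying the \emph{same} vector $v_\ell$ and let $\mu_\ell\ge 0$ be the total length devoted to $v_\ell$, so $\sum_\ell\mu_\ell=1$ and $\sum_\ell\mu_\ell v_\ell=z(1)-z(0)$ is fixed. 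Among all piecewise affine paths with these fixed total durations $(\mu_\ell)$ and fixed endpoints, the action $\mathcal Q$ — whose variable part is $\sum_{i<i'}|I_{i'}||I_i|\omega_0(w_{i'},w_i)$ — is maximized by a single block of each vector arranged in a suitable order $\sigma\in S_k$: this is the key reordering step, and it follows from the standard observation (again \cite[\S3]{PH19}) that merging two non-adjacent blocks carrying the same vector, or locally swapping adjacent blocks in the direction that increases the pairwise $\omega_0$-contribution, never decreases $\mathcal Q$. Carrying this out produces the desired $z'$ with $z'(0)=z(0)$, $z'(1)=z(1)$, $\dot z'(t)\in\{v_1,\dots,v_k\}$ a.e., and $\mathcal Q(z')\ge\mathcal Q(z)$.

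I expect the main obstacle to be Step~(1)'s inequality: one must verify carefully that breaking a block with averaged velocity $w_i$ into its constituent $v_\ell$-pieces (in \emph{some} order) does not lose action, keeping track of the cross terms with all the \emph{other} blocks $j\ne i$ as well as the new internal cross terms among the pieces of block $i$. The cross terms with other blocks are unchanged in total (they only depend on $|I_i|w_i=\sum_\ell\lambda_{i\ell}|I_i|v_\ell$ through the combination $|I_i|\omega_0(w_i,w_j)$ by bilinearity, up to how the pieces are interleaved, and one orders the pieces so that the interleaving contributes favorably), while the new internal terms add a nonnegative amount that can be arranged by choosing the internal order appropriately; this is precisely the computation in \cite[Lemma~3.1]{PH19}, and the only novelty here is that the endpoints are pinned rather than equal, which is harmless since $z(0)$ and $z(1)$ never move and the endpoint term $\omega_0(z(0),z(1))$ is inert. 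A secondary technical point is bookkeeping with empty intervals in the partition, which the definition explicitly permits and which lets us insert or delete zero-length blocks freely when reordering.
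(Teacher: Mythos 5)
Your proposal is correct and follows essentially the same route as the paper: split each block $I_i$ into consecutive sub-blocks carrying the vertex velocities $v_{i_j}$ with lengths $a_{i_j}|I_i|$, note via Proposition~\ref{prop:int} that the endpoint term $\omega_0(z(0),z(1))$ and all cross terms with the other blocks are unchanged by bilinearity, and make the new internal term nonnegative by choosing the internal order of the pieces — in the paper this is done by comparing an order with its reversal, which flips the sign of $|I_i|^2\sum_{r<s}a_{i_r}a_{i_s}\omega_0(v_{i_s},v_{i_r})$, so your initial ``spreading out only helps by convexity'' is not literally true for a fixed order, but the correction you yourself make (``in some order, chosen appropriately'') is exactly the right mechanism. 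Your Step~(2), merging sub-blocks that carry the same vector, is not needed for this lemma and is in fact the content of the separate Lemma~\ref{lem:2}.
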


\begin{proof}
Write $\dot{z}(t) = \sum_{j=1}^m \chi_{I_j}(t) w_j$, where  $w_j \in \text{conv}\{v_1,\ldots,v_{{k}}\}$
for each $j$,  and $\left(I_j\right)_{j=1}^m$ is a partition of $[0,1]$.
Clearly, there exists $l=l(i)\in  \mathbb{N}$ such that
 $w_i = \sum_{j=1}^{l} a_{i_j} v_{i_j}$, where $a_{i_j} > 0$, $i_j \in \{1,\ldots,k\}$, and  $\sum_{j=1}^{l} a_{i_j} = 1$.
Consider the partition of $I_i$ to disjoint subintervals,
$\{I_{i_j}\}^l_{j=1}$, where the length of $I_{i_j}$ is $|I_{i_j}| = a_{i_j} |I_i|$.
Define
\begin{equation}\label{e:3.2}
\dot{z}_\ast(t) = \sum_{j<i} \chi_{I_j}(t) w_j+ \sum_{j=1}^l \chi_{I_{i_j}}(t) v_{i_j} + \sum_{j>i} \chi_{I_j}(t) w_j
\end{equation}
and $z_\ast(t)=z(0)+\int_0^t \dot{z}_\ast(s)ds$ for $t\in [0,1]$. Since $\int_0^1 \dot{z}_\ast(t)dt=\int_0^1\dot{z}(t)dt$, we deduce
 $z(0)=z_\ast(0)$ and $z(1)=z_\ast(1)$.
Then  Proposition~\ref{prop:int} leads to
$$\begin{aligned}
\int_0^1 \langle -J \dot{z}_\ast , z_\ast\rangle dt & = \omega_0(z_\ast(0),z_\ast(1))+\sum_{\substack{r < s \\ r,s \neq i}} |I_r| |I_s| \omega_0(w_r,w_s) + \sum_{j=1}^l \sum_{r<i} |I_r| |I_i| a_{i_j} \omega_0(w_r, v_{i_j})  \\
 & + \sum_{j=1}^l \sum_{r > i} |I_r| |I_i| a_{i_j} \omega_0(v_{i_j}, w_r) +  \sum_{1 \leq r < s \leq l} |I_i|^2 a_{i_r} a_{i_s} \omega_0(v_{i_r},v_{i_s}) \\
& =  \omega_0(z(0),z(1))+\sum_{\substack{r < s \\ r,s \neq i}} |I_r| |I_s| \omega_0(w_r,w_s) + \sum_{r<i} |I_r| |I_i| \omega_0(w_r, w_i) \\
& + \sum_{r>i} |I_r| |I_i| \omega_0(w_i, w_r) +  \sum_{1 \leq r < s \leq l} |I_i|^2 a_{i_r} a_{i_s} \omega_0(v_{i_r},v_{i_s}) \\
& = \int_0^1 \langle -J \dot{z} , z \rangle dt +  |I_i|^2\sum_{1 \leq r < s \leq l} a_{i_r} a_{i_s} \omega_0(v_{i_r},v_{i_s}).
\end{aligned}
$$
Define $b_{i_j}=a_{i_{l+1-j}}$ and $u_{i_j}=v_{i_{l+1-j}}$ for $j=1,\cdots,l$, and
$$
\hat{I}_j=I_j\;\hbox{for $j<i$ or $j>i$},\quad \hat{I}_{i_j}=I_{i_{l+1-j}}\;\hbox{for $j=1,\cdots,l$}.
$$
As above we may show that $z_{\ast\ast}(t)=z(0)+\int_0^t \dot{z}_{\ast\ast}(s)ds$ for $t\in [0,1]$, where
$$
\dot{z}_{\ast\ast}(t) = \sum_{j<i} \chi_{\hat{I}_j}(t) w_j + \sum_{j=1}^l \chi_{\hat{I}_{i_j}}(t) u_{i_j} + \sum_{j>i} \chi_{\hat{I}_j}(t) w_j,
$$
satisfies $z(0)=z_{\ast\ast}(0)$, $z(1)=z_{\ast\ast}(1)$ and
$$
\int_0^1 \langle -J \dot{z}_{\ast\ast}, z_{\ast\ast}\rangle dt=\int_0^1 \langle -J \dot{z} , z \rangle dt +  |I_i|^2\sum_{1 \leq r < s \leq l} b_{i_r} b_{i_s} \omega_0(u_{i_r},u_{i_s}).
$$
A straightforward computation as above gives rise to
$$
\sum_{1 \leq r < s \leq l} b_{i_r} b_{i_s}\omega_0(u_{i_r},u_{i_s})=-\sum_{1 \leq r < s \leq l} a_{i_r} a_{i_s} \omega_0(v_{i_r},v_{i_s}).
$$
Hence we can always choose $u\in\{z_\ast, z_{\ast\ast}\}$ so that
\begin{equation}\label{e:3.3}
\int_0^1 \langle -J \dot{u} , u\rangle dt \geq \int_0^1 \langle -J \dot{z}, z \rangle dt .
\end{equation}

Now starting from $z$ and choosing $i=1$ we get a path $z_1$ as above, Then starting from $z_1$ and choosing $i=2$ we get a path $z_2$ again.
Continuing this progress we obtain $z_1, z_2,\cdots, z_m$. Then $z':=z_m$ satisfies the requirements
of the lemma.
\end{proof}

Suitably modifying the proof of \cite[Lemma~3.5]{PH19}, we can get the following analogues of it.

\begin{lemma}\label{lem:2}
Given a finite sequence of pairwise distinct vectors $(v_1,\cdots,v_k)$,
 if $z \in H^1([0,1],\mathbb{R}^{2n})$ is a piecewise affine path such that $\dot{z}(t) = \sum_{i=1}^m \chi_{I_i}(t) w_i$
with $w_i \in \{v_1,\cdots,v_k\}$ for each $i$,  where $\left( I_i = (\tau_{i-1},\tau_i) \right)_{i=1}^m$ is a partition of $[0,1]$,
 then there exists another piecewise affine path $z'$ such that $\dot{z}'(t) \in \{v_1,\cdots,v_k\}$ for almost every $t$,
 $z'(0)=z(0),z'(1)=z(1)$, and $\{t : \dot{z}'(t) = v_j\}$ is connected for every $j = 1,\cdots,k$. In addition,
\begin{equation}\label{e:1}
 \int_0^1 \langle -J \dot{z}',z' \rangle dt \geq \int_0^1 \langle -J \dot{z}, z \rangle dt .
 \end{equation}
\end{lemma}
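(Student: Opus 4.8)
The plan is to mimic the loop argument of \cite[Lemma~3.5]{PH19} but to keep track of the extra endpoint contribution $\omega_0(z(0),z(1))$ that appears in Proposition~\ref{prop:int}, and to exploit the fact that reordering the intervals $I_i$ of the partition changes neither $z(0)$ nor $z(1)$ (since these are determined only by $z(0)$ and $\int_0^1\dot z\,dt=\sum_i|I_i|w_i$, which is permutation-invariant). Concretely, I would first group the intervals: for each fixed $j\in\{1,\dots,k\}$, let $T_j=\sum_{\{i:\,w_i=v_j\}}|I_i|$ be the total time spent with derivative $v_j$. Any rearrangement of the $m$ pieces into a new partition $(\hat I_1,\dots,\hat I_m)$ with the same multiset of (length, vector) pairs produces a piecewise affine path $\hat z$ with $\hat z(0)=z(0)$, $\hat z(1)=z(1)$, and by Proposition~\ref{prop:int},
$$
\int_0^1\langle -J\dot{\hat z},\hat z\rangle\,dt=\omega_0(z(0),z(1))+\sum_{1\le b<a\le m}|\hat I_a||\hat I_b|\,\omega_0(\hat w_a,\hat w_b),
$$
so only the double sum is affected. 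Thus the problem is reduced to the purely combinatorial task of: among all orderings of the pieces, pick one in which all pieces carrying $v_j$ are consecutive (for every $j$), without decreasing $\sum_{b<a}|\hat I_a||\hat I_b|\omega_0(\hat w_a,\hat w_b)$; and once the pieces carrying a fixed $v_j$ are adjacent, they may be merged into a single interval of length $T_j$ (this merge does not change the sum, since $\omega_0(v_j,v_j)=0$), which yields the desired connectedness of $\{t:\dot z'(t)=v_j\}$.

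For the combinatorial core I would use the standard bubble-sort/exchange argument from \cite[Lemma~3.5]{PH19}: repeatedly locate two pieces $I_a,I_b$ with $b<a$, $w_a=w_b=v_j$ for some $j$, but with at least one piece $I_c$ ($b<c<a$) carrying $w_c=v_{j'}\ne v_j$ strictly between them, and move the ``inner'' block so as to make the two $v_j$-pieces adjacent. The effect of transposing adjacent blocks on the quadratic form is controlled by the antisymmetry of $\omega_0$; as in \cite{PH19}, for any such move one of the two possible relocations (slide the $v_{j'}$-block left past the $v_j$-piece, or slide it right) changes the sum by $\pm c\,\omega_0(v_j,v_{j'})$ for some $c>0$, and choosing the sign appropriately gives a net change $\ge 0$. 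Iterating over all ``bad'' triples — with a suitable monovariant, e.g. the number of inversions among pieces sharing a common vector, or lexicographically the tuple of block-positions — terminates in finitely many steps and leaves every $\{t:\dot z'(t)=v_j\}$ an interval; inequality (\ref{e:1}) holds since each step was non-decreasing and the $\omega_0(z(0),z(1))$ term never moved.

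The main obstacle I anticipate is the bookkeeping in the exchange step: unlike the loop case of \cite{PH19}, where cyclic symmetry can be used freely, here the order of the pieces genuinely matters because the endpoints $z(0),z(1)$ are fixed and the ``wrap-around'' pair present in the loop formula is replaced by the constant $\omega_0(z(0),z(1))$. One must therefore verify carefully that the block-moving operation is a rearrangement of pieces \emph{within} $[0,1]$ only — never a cyclic shift — so that the endpoint term is preserved exactly, and that after each move the derivative still takes values in $\{v_1,\dots,v_k\}$. I would also need the pairwise-distinctness hypothesis on $(v_1,\dots,v_k)$ precisely here, to make ``$w_i=v_j$'' unambiguous and to ensure that merging adjacent equal-vector pieces is well defined. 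Beyond that, the only genuine computation is the evaluation of the change in $\sum_{b<a}|\hat I_a||\hat I_b|\,\omega_0(\hat w_a,\hat w_b)$ under a single adjacent-block transposition, which is a short application of bilinearity and antisymmetry of $\omega_0$ exactly as in the proof of Lemma~\ref{lem:1}.
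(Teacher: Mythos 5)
Your proposal is correct and essentially the paper's own argument: both rest on Proposition~\ref{prop:int}, on the observation that rearranging the pieces fixes $z(0)$, $z(1)$ and hence the term $\omega_0(z(0),z(1))$, and on the two-option trick — the two ways of making two equal-vector pieces adjacent change the quadratic double sum by quantities of opposite sign up to positive factors, so one choice is non-decreasing — iterated finitely many times with a merge at each stage. The only cosmetic differences are that the paper realizes the move by deleting $I_s$ and lengthening $I_r$ by $|I_s|$ (which is exactly the same permutation of pieces as your ``slide the whole intervening block past one of the two pieces and merge''), and that when the intervening stretch carries several distinct vectors the change is $\pm$ a fixed signed sum $2|I_b|\sum_c |I_c|\,\omega_0(w_c,v_j)$ (resp.\ with $|I_a|$) rather than a single multiple of $\omega_0(v_j,v_{j'})$; your opposite-sign argument goes through verbatim.
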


\begin{proof}
Assume $w_r=w_s$ for some $r<s$. Consider a rearrangement of the intervals $I_i$
 by deleting the intervals $I_s$ and increasing the length of the interval $I_r$ by $|I_s|=\tau_s-\tau_{s-1}$, that is,
$$I_i^\ast=\left\{
\begin{aligned}
&(\tau_{i-1},\tau_i), &i<r,\\
&(\tau_{i-1},\tau_i+\tau_s-\tau_{s-1}), &i=r,\\
&(\tau_{i-1}+\tau_s-\tau_{s-1},\tau_i+\tau_s-\tau_{s-1}), &r<i<s, \\
&\emptyset, &i=s, \\
&(\tau_{i-1},\tau_i), &i>s. \\
\end{aligned}
\right.
$$
Define $z_\ast$ by  $z_\ast(t)=z(0)+\int_0^t\dot{z}_\ast(s)ds$, where $\dot{z}_\ast(t)=\sum_{i=1}^m\chi_{I_i^\ast}(t)w_i$.
Then
$$
\int_0^1\dot{z}_\ast dt=\sum_{i=1}^m|I_i^\ast|w_i=\sum_{i=1}^m|I_i|w_i=\int_0^1\dot{z}dt
$$
and thus $z_\ast(0)=z(0)$ and $z_\ast(1)=z(1)$.
Since $I_i^\ast=I_i$ for $i<r$ or $i>s$, by Proposition~\ref{prop:int}, one can get
$$
\int_0^1\langle -J\dot{z}_\ast, z_\ast\rangle dt-\int_0^1\langle -J\dot{z},z\rangle dt= \sum_{i=r+1}^{s-1} 2|I_s||I_i|\omega_0(w_s,w_i).
 $$
Similarly,  by erasing $I_r$ and increasing the length of $I_s$ by $|I_r|$, we get a $z_{\ast\ast}$ such that
 $$
 \int_0^1\langle -J\dot{z}_{\ast\ast}, z_{\ast\ast}\rangle dt-\int_0^1\langle -J\dot{z},z\rangle dt= \sum_{i=r+1}^{s-1} 2|I_r||I_i|\omega_0(w_i,w_r).
  $$
It follows that either $z_\ast$ or $z_{\ast\ast}$ satisfies (\ref{e:1}).
Denote by $z_1\in \{z_\ast,z_{\ast\ast}\}$ satisfying (\ref{e:1}). Then
$$
z_1(t)=z(0)+\int_0^t\dot{z}_1(s)ds\quad\hbox{with}\;\dot{z}_1(t)=\sum_{i=1}^m\chi_{I_i^1}(t)w_i.
$$
Repeating this methods for different disjoint nonempty interval $I_r^1,I_s^1$ whenever $w_r=w_s$
we get a $z_2$ again. Proceeding with this progress for $z_2$, after finite steps we get a $z'$ with
the expected properties.
\end{proof}

Having the above lemmas we have the following corresponding result with \cite[Proposition~3.5]{PH19},
which may be proved by repeating the arguments therein because $H_K^*=\frac{1}{4}h_K^2$.

\begin{proposition}\label{prop:1}
 For a convex polytope  $K \subset \mathbb{R}^{2n}$ containing $0$ in the interior of it,
 let $\{F_i\}_{i=1}^{{\bf F}_{K}}$ be the $(2n-1)$-dimensional facets of it,
 let  $n_i$ be the unit outer normal to $F_i$,  let $p_i = J \partial H_K |_{F_i} = \frac{2}{h_i} J n_i$,
where $h_i := h_K(n_i)$ and $h_K(x)=\sup\{\langle y,x\rangle\,|\,y\in K\}$.
Let $c > 0$ be a constant and let $z \in H^1([0,1],\mathbb{R}^{2n})$  satisfies that for almost every $t$, there is a non-empty face of $K$, $F_{j_1} \cap \cdots \cap F_{j_l} \neq \emptyset$,  with $\dot{z}(t) \in c \cdot \text{conv}\{p_{j_1},\cdots,p_{j_l}\}$.
Then
$$
\int_0^1 H_K^*(-J\dot{z}(t))dt = c^2.
$$
\end{proposition}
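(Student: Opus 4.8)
The plan is to reduce the statement to the two facts already recorded in Section~\ref{sec:pre}: the identity $H_K^\ast=\tfrac14 h_K^2$ (so that $\int_0^1 H_K^\ast(-J\dot{z}(t))\,dt=\tfrac14\int_0^1 h_K(-J\dot{z}(t))^2\,dt$), and the elementary relation $-Jp_i=-J\cdot\tfrac{2}{h_i}Jn_i=\tfrac{2}{h_i}n_i$ for every facet $F_i$. Recalling that the support function $h_K$ is convex and positively $1$-homogeneous and that $h_K(n_i)=h_i$ by definition of the oriented heights, the facet case is immediate: $h_K(-Jp_i)=h_K\!\big(\tfrac{2}{h_i}n_i\big)=\tfrac{2}{h_i}h_K(n_i)=2$, so $H_K^\ast(-Jp_i)=1$. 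The whole point is therefore to upgrade this to an exact equality $h_K(-J\dot{z}(t)/c)=2$ for almost every $t$, under the face hypothesis.

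First I would fix a $t$ for which the hypothesis holds and write $\dot{z}(t)=c\sum_{r=1}^l\lambda_r p_{j_r}$ with $\lambda_r\geq 0$, $\sum_r\lambda_r=1$, and $F:=F_{j_1}\cap\cdots\cap F_{j_l}\neq\emptyset$; set $w:=-J\dot{z}(t)/c=\sum_{r=1}^l\lambda_r\tfrac{2}{h_{j_r}}n_{j_r}$. The upper bound $h_K(w)\leq 2$ follows from convexity and $1$-homogeneity of $h_K$ together with the facet computation above: $h_K(w)\leq\sum_r\lambda_r h_K\!\big(\tfrac{2}{h_{j_r}}n_{j_r}\big)=\sum_r\lambda_r\cdot 2=2$. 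For the matching lower bound I would pick any point $x^\ast\in F$; since $x^\ast\in F_{j_r}$ we have $\langle n_{j_r},x^\ast\rangle=h_{j_r}$ for every $r$, hence $\langle w,x^\ast\rangle=\sum_r\lambda_r\cdot 2=2$, and because $x^\ast\in F\subseteq K$ this gives $h_K(w)=\sup_{y\in K}\langle w,y\rangle\geq\langle w,x^\ast\rangle=2$. Thus $h_K(w)=2$, and using $1$-homogeneity once more,
$$
H_K^\ast(-J\dot{z}(t))=\tfrac14\,h_K(cw)^2=\tfrac14\,c^2\,h_K(w)^2=c^2 .
$$

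Since $t\mapsto H_K^\ast(-J\dot{z}(t))$ is then equal to the constant $c^2$ for almost every $t$, integrating over $[0,1]$ gives $\int_0^1 H_K^\ast(-J\dot{z}(t))\,dt=c^2$. I do not expect a serious obstacle here: this is the path analogue of \cite[Proposition~3.5]{PH19}, and the estimate is purely pointwise, so replacing loops by paths changes nothing in the argument. The only place the hypothesis is genuinely used — and the only step where a careless argument would break — is the lower bound $h_K(w)\geq 2$, which requires the face $F_{j_1}\cap\cdots\cap F_{j_l}$ to be nonempty so that a common point $x^\ast$ of all the facets $F_{j_r}$ exists; from $\dot{z}(t)\in c\cdot{\rm conv}\{p_1,\dots,p_{{\bf F}_K}\}$ alone one only gets $H_K^\ast(-J\dot{z}(t))\leq c^2$. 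An alternative, equally short route bypasses $h_K$ entirely: for any $x^\ast\in F$ one has $w\in\partial H_K(x^\ast)$ by the description of $\partial H_K$ recalled below Theorem~\ref{th:convex}, and the Fenchel equality combined with the Euler relation $\langle v,x\rangle=2H_K(x)$ for $v\in\partial H_K(x)$ (valid since $H_K$ is $2$-homogeneous) gives $H_K^\ast(w)=2H_K(x^\ast)-H_K(x^\ast)=1$, whence $H_K^\ast(-J\dot{z}(t))=c^2H_K^\ast(w)=c^2$ by $2$-homogeneity of $H_K^\ast$.
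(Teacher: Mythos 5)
Your proof is correct and follows essentially the route the paper intends: it reduces the claim to the identity $H_K^\ast=\tfrac14 h_K^2$ and then repeats, pointwise in $t$, the support-function argument of \cite[Proposition~3.5]{PH19} (upper bound by convexity, lower bound by pairing with a point of the common face), which is exactly what the paper invokes without writing out. Your writeup simply supplies those details (plus an optional Fenchel-equality variant), so no further comment is needed.
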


\section{Proof of Theorem~\ref{th:main}}\label{sec:main}

We begin with a similar result to \cite[Theorem~1.5]{PH19}.

\begin{theorem}\label{th:4.1}
Let $K$ be a convex polytope as above (\ref{e:Comb formula}). Suppose $0\in{\rm Int}(K)$.
Then for any $\Psi\in{\rm Sp}(2n,\mathbb{R})$  there exists a generalized
$\Psi$-characteristic $\gamma: [0,1] \rightarrow \partial K $ with action
$$
A(\gamma)=\min\{A(x)>0\, |\, x \text{ is a generalized }  \Psi \text{-characteristic on } \partial K \}
$$
such that $\dot{\gamma}$ is piecewise constant and is composed of a finite sequence of vectors, i.e. there exists a sequence of vectors $(w_1,\ldots,w_m)$, and a sequence $(0=\tau_0<\cdots<\tau_{m-1}<\tau_{m}=1)$ so that $\dot{\gamma}(t) = w_i$ for $\tau_{i-1} < t < \tau_{i}$.
Moreover, for each $j \in \{1,\cdots,m\}$ there exists $i \in \{1,\cdots,{\bf F}_K\}$ so that $w_j = C_j J n_i$  for some $C_j > 0$, and for each $i \in \{1,\cdots,
{\bf F}_K\}$ and for every $C>0$ the set $\{t\in [0,1]\,|\, \dot{\gamma}(t) = C J n_i\}$ is either empty or connected, i.e. for every $i$ there is at most one $j \in \{1,\ldots,m\}$
 with $w_j = C_j J n_i$.
Hence $\dot{\gamma}$ has at most ${\bf F}_K$ discontinuous points, and $\gamma$ visits the interior of each facet at most once.
\end{theorem}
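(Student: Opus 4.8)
The plan is: obtain an honest minimizer from Theorem~\ref{th:convex}, regularize it to a piecewise affine path with Lemma~\ref{lem:PH3.1}, put that into combinatorial normal form using Lemmas~\ref{lem:1} and~\ref{lem:2} without decreasing the action, pass to a limit as the regularization parameter tends to $0$, and run the limit path back through Theorem~\ref{th:convex} to recover that it lies on $\partial K$. Write $T:=c^{\Psi}_{\rm EHZ}(K)$ and $p_i:=\frac{2}{h_i}Jn_i$; recall from \S\ref{sec:pre} that $H_K^{*}=\frac14 h_K^2$ is positively $2$-homogeneous and $H_K^{*}(-Jp_i)=\frac14 h_K\big(\tfrac{2}{h_i}n_i\big)^2=1$ for every $i$. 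If $\Psi=I_{2n}$ (equivalently ${\rm dim}\,E_\Psi=0$) this is \cite[Theorem~1.5]{PH19}; so assume ${\rm dim}\,E_\Psi\geqslant1$. By Theorem~\ref{th:convex} there is a minimizer $u\in\mathcal A_\Psi$ of $I_K$ with $I_K(u)=T$ and $A(u)=1$, and, since $K$ is a polytope, $\dot u(t)\in\sqrt T\,{\rm conv}\{p_1,\dots,p_{{\bf F}_K}\}$ for a.e.\ $t$.

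Fix $\varepsilon>0$. Lemma~\ref{lem:PH3.1} applied to $u$ with the vectors $\sqrt T p_i$ gives a piecewise affine $\varsigma_\varepsilon$ with $\|u-\varsigma_\varepsilon\|_{W^{1,2}}<\varepsilon$, $\dot\varsigma_\varepsilon\in\sqrt T\,{\rm conv}\{p_i\}$ a.e., $\varsigma_\varepsilon(0)=u(0)$ and $\varsigma_\varepsilon(1)=u(1)$. Since $(z_1,z_2)\mapsto\int_0^1\langle-J\dot z_1,z_2\rangle dt$ is continuous on $W^{1,2}$-bounded sets and the $\dot\varsigma_\varepsilon$ lie in a fixed bounded set, $A(\varsigma_\varepsilon)\geqslant A(u)-C\varepsilon=1-C\varepsilon$ for some $C$ independent of $\varepsilon$. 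Lemma~\ref{lem:1} (with the $\sqrt T p_i$) then yields a piecewise affine $\varsigma_\varepsilon'$ with the same endpoints, $\dot\varsigma_\varepsilon'\in\{\sqrt Tp_1,\dots,\sqrt Tp_{{\bf F}_K}\}$ a.e.\ and $A(\varsigma_\varepsilon')\geqslant A(\varsigma_\varepsilon)$; and Lemma~\ref{lem:2}---whose pairwise-distinctness hypothesis holds because $0\in{\rm Int}(K)$ forces $h_i>0$, hence the $p_i$ are distinct---yields $\varsigma_\varepsilon''$ with the same endpoints, $\dot\varsigma_\varepsilon''\in\{\sqrt Tp_i\}$ a.e., each set $\{t:\dot\varsigma_\varepsilon''(t)=\sqrt Tp_i\}$ connected, and $A(\varsigma_\varepsilon'')\geqslant A(\varsigma_\varepsilon')\geqslant1-C\varepsilon$. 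For the matching upper bound, $\widehat z_\varepsilon:=A(\varsigma_\varepsilon'')^{-1/2}\varsigma_\varepsilon''$ still satisfies $\widehat z_\varepsilon(1)=\Psi\widehat z_\varepsilon(0)$, $\widehat z_\varepsilon(0)\in E_\Psi$ and $A(\widehat z_\varepsilon)=1$, so $\widehat z_\varepsilon\in\mathcal A_\Psi$ and $I_K(\widehat z_\varepsilon)\geqslant\min_{\mathcal A_\Psi}I_K=T$; but $\dot{\widehat z}_\varepsilon$ takes values $A(\varsigma_\varepsilon'')^{-1/2}\sqrt Tp_i$, so $2$-homogeneity and $H_K^{*}(-Jp_i)=1$ give $I_K(\widehat z_\varepsilon)=A(\varsigma_\varepsilon'')^{-1}T$, whence $A(\varsigma_\varepsilon'')\leqslant1$. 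Thus $A(\varsigma_\varepsilon'')\to1$ as $\varepsilon\to0$.

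Each $\varsigma_\varepsilon''$ is determined by a permutation $\sigma_\varepsilon\in S_{{\bf F}_K}$ (the order in which the directions $p_i$ appear, with empty intervals for absent ones), a length vector $(\ell^\varepsilon_i)$ in the standard simplex, and the fixed base point $u(0)$. Along a subsequence with $\sigma_\varepsilon\equiv\sigma_*$ and $(\ell^\varepsilon_i)\to(\ell^*_i)$, the $\varsigma_\varepsilon''$ converge in $W^{1,2}$ to a path $\varsigma^\flat$ of the same shape; by (\ref{e:3.1}), $2A(\varsigma_\varepsilon'')$ is a polynomial in $(\ell^\varepsilon_i)$ plus the constant $\omega_0(u(0),u(1))$, so $A(\varsigma^\flat)=\lim_\varepsilon A(\varsigma_\varepsilon'')=1$ and $\varsigma^\flat\in\mathcal A_\Psi$; and $I_K(\varsigma^\flat)=\int_0^1 H_K^{*}(-J\dot\varsigma^\flat)dt=T$ because $\dot\varsigma^\flat\in\{\sqrt Tp_i\}$ a.e. Hence $\varsigma^\flat$ is a minimizer of $I_K$ over $\mathcal A_\Psi$. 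Applying the first part of Theorem~\ref{th:convex} to $\varsigma^\flat$ gives $a_0\in{\rm Ker}(\Psi-I_{2n})$ with $x^{*}(t)=\sqrt T\,\varsigma^\flat(t/T)+a_0/\sqrt T$, $t\in[0,T]$, a generalized $\Psi$-characteristic on $\partial K$ with $A(x^{*})=T$; since $\varsigma^\flat$ has piecewise constant derivative with connected level sets, so does $x^{*}$, with $\dot x^{*}$ valued in $\{p_1,\dots,p_{{\bf F}_K}\}$. Then $\gamma(t):=x^{*}(Tt)$, $t\in[0,1]$, has the required properties: it lies on $\partial K$, is a generalized $\Psi$-characteristic, and by (\ref{e:PsiEHZ}) $A(\gamma)=T=\min\{A(x)>0:x\text{ a generalized }\Psi\text{-characteristic on }\partial K\}$; moreover $\dot\gamma$ is piecewise constant, equal on one interval to $Tp_i=\frac{2T}{h_i}Jn_i=C\,Jn_i$ with $C>0$, so it has at most ${\bf F}_K$ discontinuities, and---since $\dot\gamma(t)$ can point along $Jn_i$ only while $\gamma(t)$ runs across the convex facet $F_i$, which happens within one interval---$\gamma$ visits the relative interior of each facet at most once.

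The main obstacle is the limit $\varepsilon\to0$: one must keep the combinatorial normal form while proving that the limit path is a genuine minimizer of $I_K$, since Lemmas~\ref{lem:PH3.1},~\ref{lem:1},~\ref{lem:2} only preserve the endpoints and (do not decrease) the action, not membership in $\partial K$, so the boundary condition can be recovered only at the end by re-invoking Theorem~\ref{th:convex}.
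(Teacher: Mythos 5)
Your proof is correct and follows essentially the same route as the paper's: a minimizer of $I_K$ over $\mathcal{A}_\Psi$ from Theorem~\ref{th:convex}, approximation by Lemma~\ref{lem:PH3.1}, reduction to combinatorial normal form via Lemmas~\ref{lem:1} and~\ref{lem:2}, a compactness argument over permutations and interval lengths to get a piecewise affine minimizer, and a final application of Theorem~\ref{th:convex} to recover the generalized $\Psi$-characteristic on $\partial K$. The only (harmless) difference is bookkeeping: you keep the approximants unnormalized and obtain $A(\varsigma_\varepsilon'')\leqslant 1$ by testing the rescaled path $\widehat z_\varepsilon$ against the minimality of $I_K$, whereas the paper rescales $u_N'=u_N/A_N$ and passes to the limit of the normalized paths.
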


\begin{proof}
 Let $z:[0, T]\to\partial K$ be a generalized $\Psi$-characteristic with action $A(z)=c^\Psi_{\rm EHZ}(K)=T$.
 By Theorem~\ref{th:convex} we have $b\in{\rm Ker}(\Psi-I_{2n})$ and the $W^{1,2}$-path $u
 \in\mathcal{A}_\Psi$ satisfying $I_K(u)=T$ and (\ref{e:inclusion}). Thus we obtain
$\int_0^1 H_K^*(-J\dot{u}(t))dt = T$ by Proposition~\ref{prop:1}.
 For convenience let $c=T^{1/2}$.
  The next argument is the same as the proof of \cite[Theorem~1.5]{PH19}, we write it for completeness.

For every $N\in\mathbb{N}$,  Lemma~\ref{lem:PH3.1} yields a piecewise affine path $\zeta_N$ such that
$$
\parallel u-\zeta_N\parallel_{W^{1,2}}\leqslant {\frac{1}{N}}\quad\hbox{and}\quad
 \dot{\zeta}_N(t)\in c \cdot {\rm conv}\{p_1,\cdots,p_{{\bf F}_K}\}
 $$
 for almost every $t$, $\zeta_N(0)=u(0),\zeta_N(1)=u(1)$. By applying Lemma~\ref{lem:1} with $v_i=cp_i, i=1,\cdots,{\bf F}_K$ to $\zeta_N$,
we get a piecewise affine path $\zeta_N' \in W^{1,2}([0,1],\R^{2n})$ such that
$$
\zeta_N'(0)=u(0),\;\; \zeta_N'(1)=u(1),\;\;
\dot{\zeta}_N'(t) \in \{ v_1,\ldots, v_{{\bf F}_K}\} \;\;{\rm a.e.},\;\;\hbox{and}\; A(\zeta_N')\ge A(\zeta_N).
$$
Applying Lemma~\ref{lem:2}  to $\zeta_N'$ again, we get a piecewise affine path $u_N:[0,1]\to\R^{2n}$
from $u(0)$ to $u(1)$ such that
$$
\dot{u}_N(t)=\sum_{i=1}^{m_N}\chi_{I_i^N}(t)v_i^N
$$
where $v_i^N=v_j$ for some $j\in\{1,\cdots,{\bf F}_K\}$ and for every $j$ there is at most one such $i$,
  and that
$$
A_N:=\sqrt{A(u_N)}\geqslant \sqrt{A(\zeta_N)}.
$$
Define $u_N':=\frac{u_N}{A_N}\in\mathcal{A}_\Psi$  and  $c_N=:\frac{c}{A_N}$.
Write  $w_i^N:=\frac{v_i^N}{A_N}$ for the velocities of $u_N'$, which sits in
the set $\frac{c}{A_N}\cdot\{p_1,\cdots,p_{{\bf F}_K}\}$. Since
$\parallel u-\zeta_N\parallel_{W^{1,2}}\leqslant {\frac{1}{N}}$ we deduce  that $A(\zeta_N)\rightarrow 1$
as $N\rightarrow\infty$. Hence $\varliminf_{N\rightarrow\infty} A_N\geqslant 1$, and $\varlimsup_{N\rightarrow\infty}c_N\leqslant c$.
Moreover  Proposition~\ref{prop:1} and  the minimality of $I_K(u)$
imply $c_N^2=I_K(u_N')\geqslant I_K(u)=c^2$. We deduce $\lim_{N\rightarrow\infty}c_N=c$ and thus
$\lim_{N\rightarrow\infty}A_N=1$.

Let $\mathcal{A}^1$ consist of $z\in H^1([0,1],\mathbb{R}^{2n})$ for which there exist $C>0$ and
an increasing sequence of numbers $0 = \tau_0 \leq \tau_1 \leq \ldots \leq \tau_{{\bf F}_K} = 1$
such that
$$
\dot{z}(t)=\sum_{i=1}^{{\bf F}_K}\chi_{I_i}(t) C\cdot p_{\sigma(i)}
$$
with $I_i = (\tau_{i-1},\tau_i)$, where $\sigma\in S_{{\bf F}_K}$ is the permutations on $\{1,\cdots,{\bf F}_K\}$.
 Define a map
 \begin{equation}\label{e:Phi}
 \Phi:\mathcal{A}^1\rightarrow S_{{\bf F}_K}\times\mathbb{R}^{{\bf F}_K},\; z\mapsto (\sigma,(|I_1|,\cdots,|I_{{\bf F}_K}|)).
 \end{equation}
Clearly, the image ${\rm Im}(\Phi)$ is contained in the compact subset of
$S_{{\bf F}_K}\times\mathbb{R}^{{\bf F}_K}$,
$$
S_{{\bf F}_K}\times\left\{(t_1,\cdots,t_{{\bf F}_K})\in\mathbb{R}^{{\bf F}_K}\,\bigg|\, t_i\geqslant0\;\forall i,\;\sum_{i=1}^{{\bf F}_K}t_i=1\right\}.
$$
Since  $u_N'\in\mathcal{A}^1$ with $C=c_N$, we can write $\Phi(u_N')=(\sigma^N,(t_1^N,\cdots, t_{{\bf F}_K}^N))$.
After passing to a subsequence, we can assume that $\sigma^N=\sigma$ is constant, and $(t_1^N,\cdots, t_{{\bf F}_K}^N)$ converges to a vector $(t_1^{\infty},\cdots, t_{{\bf F}_K}^{\infty})$.
Define
\begin{eqnarray*}
&&\tau_0^\infty=0,\; \tau_1^\infty=\tau_0^\infty+t_1^{\infty},
 \;\tau_j^\infty=\tau_0^\infty+ \sum^j_{i=1}t_i^{\infty},\;j=2,\cdots, {{\bf F}_K},\\
&&I_i^\infty = (\tau_{i-1}^\infty,\tau_i^\infty),\; i=1,\cdots, {{\bf F}_K}
\end{eqnarray*}
and the piecewise affine path  $u_{\infty}'(t):=u(0)+\int^t_0\dot{u}_{\infty}'(s)ds$ with
$$
\dot{u}_{\infty}'(t)=\sum_{i=1}^{{\bf F}_K}\chi_{I_i^\infty}(t) c\cdot p_{\sigma(i)}.
$$
Let $\mathcal{T}^N=\{t\in [0,1]\,|\, \dot{u}_N'(t)=\frac{c}{c_N}\dot{u}_{\infty}'(t)\}$. Then
$$
\int_{\mathcal{T}^N}\parallel \dot{u}_N'(t)-\dot{u}_{\infty}'(t)\parallel^2dt\rightarrow 0\quad\hbox{as $N\rightarrow\infty$}.
$$
Since
$\parallel \dot{u}_N'(t)-\dot{u}_{\infty}'(t)\parallel^2$ is bounded on
$\{t\in[0,1]\,|\, \hbox{$\dot{z}_N'(t)$ and $\dot{z}_{\infty}'(t)$ are defined}\}$,
  as $N\rightarrow\infty$ we get $|\mathcal{T}^N|\rightarrow 1$ and therefore
  $$
  \int_{[0,1]\setminus\mathcal{T}^N}\parallel \dot{u}_N'(t)-\dot{u}_{\infty}'(t)\parallel^2dt\rightarrow 0.
  $$
Observe that $\lim_{N\rightarrow\infty}\int_0^1 \dot{u}_N'(t)dt=\int_0^1\dot{u}(t)dt$ implies $\int_0^1\dot{u}_{\infty}'(t)dt=\int_0^1\dot{u}(t)dt$. We deduce
$$
u_{\infty}'(1)=u_{\infty}'(0)+\int_0^1\dot{u}_{\infty}'(t)dt=u(0)+\int_0^1\dot{u}(t)dt=u(1)
$$
and so $u_{\infty}'(1)=\Psi u_{\infty}'(0)$. Moreover
$$\begin{aligned}
 |A(u_{\infty}')-1|&=|A(u_{\infty}')-A(u_N')|\\
 &=\left|\frac{1}{2}\int_0^1 \langle -J\dot{u}_{\infty}'(t), u_{\infty}'(t)\rangle-\langle
 -J\dot{u}_N'(t), u_N'(t)\rangle dt\right|\\
 &\leqslant\left|\frac{1}{2}\int_0^1\langle -J(\dot{u}_{\infty}'(t)-\dot{u}_N'(t)),
 u_{\infty}'(t)\rangle
 dt\right|+\left|\frac{1}{2}\int_0^1\langle-J\dot{u}_N'(t), u_{\infty}'(t)-u_N'(t)\rangle dt\right|\\
 &\leqslant\frac{1}{2}\int_0^1|\dot{u}_{\infty}'(t)-\dot{u}_N'(t)||u_{\infty}'(t)|dt+\frac{1}{2}\int_0^1|\dot{u}_N'(t)||u_{\infty}'(t)-u_N'(t)|dt\rightarrow
 0
 \end{aligned}$$
 because  $\dot{u}_N'$ and $u_{\infty}'$ are bounded. Then
 $A(u_{\infty}')=1$,  and thus
 $u_{\infty}'\in\mathcal{A}_\Psi$ and
 $$
 I_K(u_{\infty}')=
 \lim_{N\to\infty}I_K(u_N')=\lim_{N\to\infty}c_N^2=
 c^2=T=c^\Psi_{\rm EHZ}(K).
 $$
 By Theorem~\ref{th:convex} we have $a_0\in {\rm Ker}(\Psi-I_{2n})$  such that the $W^{1,2}$-path
\begin{equation}\label{e:repara11}
[0, T]\ni t\mapsto \gamma^\ast(t)=\sqrt{T}u'_\infty(t/T)+a_0/\sqrt{T}
\end{equation}
is a piecewise affine generalized $\Psi$-characteristic  on $\partial K$
with action $A(\gamma^\ast)=c^\Psi_{\rm EHZ}(K)$. Then  the generalized $\Psi$-characteristic  on $\partial K$,
$[0,1]\ni t\mapsto \gamma(t):=\gamma^\ast(Tt)$, has
 action $A(\gamma)=c^\Psi_{\rm EHZ}(K)$ and satisfies
   $\dot{\gamma}(t)\in T\cdot\{p_1,\cdots,p_{{\bf F}_K}\}$ for almost every $t\in [0,1]$ and that
    the set $\{t:\dot{\gamma}(t)=p_i\}$ is connected for every $i$.
Recall $p_i=\frac{2}{h_i}Jn_i$. Theorem~\ref{th:4.1} is proved.
\end{proof}

\begin{proof}[Proof of Theorem \ref{th:main}]
{\bf Step 1}. {\it Case $0\in{\rm Int}(K)$}.
Let $\mathcal{A}_\Psi^0$ consist of $z\in \mathcal{A}_\Psi$ for which there exist $C>0$ and
an increasing sequence of numbers $0 = \tau_0 \leq \tau_1 \leq \ldots \leq \tau_{{\bf F}_K} = 1$
such that
\begin{equation}\label{e:affine}
\dot{z}(t)=\sum_{i=1}^{{\bf F}_K}\chi_{I_i}(t) C\cdot p_{\sigma(i)}
\end{equation}
with $I_i = (\tau_{i-1},\tau_i)$, where $\sigma\in S_{{\bf F}_K}$ is a permutation on $\{1,\cdots,{\bf F}_K\}$.
Then $u'_\infty$ in the proof of Theorem~\ref{th:4.1} belongs to $\mathcal{A}_\Psi^0$
and satisfies $I_K(u'_\infty)=c^{\Psi}_{{\rm EHZ}}(K)$.
Thus
\begin{equation}\label{e:affine1}
c^\Psi_{\rm EHZ}(K)=\min\{I_K(z)\,|\,z\in \mathcal{A}_\Psi\}=\min\{I_K(z)\,|\,z\in \mathcal{A}_\Psi^0\}.
\end{equation}
For any $z\in \mathcal{A}_\Psi^0$, $\dot{z}$ has the form of (\ref{e:affine}) and hence
$$
z(1)-z(0)=\int_0^1\dot{z}(t)
dt=C\sum_{i=1}^{{\bf F}_K}T_i p_{\sigma(i)}
$$
where $T_i=|I_i|$, and Proposition~\ref{prop:int} yields
$$
1=\frac{1}{2}\int_0^1 \langle -J \dot{z}, z \rangle dt=\frac{1}{2}C^2\sum_{1\leqslant
j<i\leqslant{\bf F}_K} T_iT_j\omega_0(p_{\sigma(j)},p_{\sigma(i)})+\frac{1}{2}\omega_0(z(0),z(1)).
$$
Let $v=z(0)/C$. The above two formulas become, respectively,
$\Psi v-v=\sum_{i=1}^{{\bf F}_K}T_i p_{\sigma(i)}$ and
$$
1=\frac{1}{2}\int_0^1 \langle -J \dot{z}, z \rangle dt=\frac{1}{2}C^2\sum_{1\leqslant
j<i\leqslant{\bf F}_K} T_iT_j\omega_0(p_{\sigma(j)},p_{\sigma(i)})+ C^2\frac{1}{2}\omega_0(v,\Psi v).
$$
By Proposition~\ref{prop:1} we have
 $I_K(z)=C^2$, and thus
 \begin{equation}\label{e:affine*}
 I_K(z)=\frac{2}{\sum_{1\leqslant j<i\leqslant{\bf F}_K}
T_iT_j\omega_0(p_{\sigma(j)},p_{\sigma(i)})-\omega_0(\Psi v, v)}>0.
\end{equation}
 With $E_{\Psi}$ defined as in Theorem~\ref{th:main} let
$$
M^\ast_{\Psi}(K)=\left\{((T_i)_{i=1}^{{\bf F}_K},v,\sigma)\,\bigg|\,\begin{array}{ll}
&\sigma\in
S_{{\bf F}_K},\; T_i\geqslant
0,\sum_{i=1}^{{\bf F}_K}T_i=1,\sum_{i=1}^{{\bf F}_K}T_ip_{\sigma(i)}=\Psi v-v,\\
&\sum_{1\leqslant j<i\leqslant{\bf F}_K}T_iT_j\omega_0(p_{\sigma(j)},p_{\sigma(i)})>\omega_0(\Psi v, v),\;v\in E_{\Psi}
\end{array}
\right\},
$$
For every triple  $((T_i)_{i=1}^{{\bf F}_K}, v, \sigma)\in M^\ast_\Psi(K)$, as the construction of $u'_\infty$ in the proof of Theorem~\ref{th:4.1}
we can use it to construct a $z\in \mathcal{A}_\Psi^0$ such that (\ref{e:affine*}) holds.
It follows from these and (\ref{e:affine1}) that
$$
c^{\Psi}_{{\rm EHZ}}(K)=\min_{((T_i)_{i=1}^{{\bf F}_K},v, \sigma)\in M^\ast_\Psi(K)}\frac{2}{\sum_{1\leqslant j<i\leqslant{\bf F}_K}
T_iT_j\omega_0(p_{\sigma(j)},p_{\sigma(i)})-\omega_0(\Psi v, v)},
$$
Let $\beta_{\sigma(i)}=\frac{T_i}{h_{\sigma(i)}}$. Since $p_i=\frac{2}{h_i}Jn_i$, we get
$$
c^{\Psi}_{{\rm EHZ}}(K)=\min_{((\beta_i)_{i=1}^{{\bf F}_K},v, \sigma)\in M_{\Psi}(K)}\frac{2}{4\sum_{1\leqslant j<i\leqslant{\bf F}_K}
\beta_{\sigma(i)}\beta_{\sigma(j)}\omega_0(n_{\sigma(j)},n_{\sigma(i)})-\omega_0(\Psi v, v)},
$$
where $M_{\Psi}(K)$ is as in Theorem~\ref{th:main}.

\noindent{\bf Step 2}. {\it General case}.
 Let $p\in{\rm Int}(K)$ be a fixed point of $\Psi$.
  Consider the symplectomorphism
  \begin{equation}\label{e:translate}
\phi:(\mathbb{R}^{2n},\omega_0)\rightarrow (\mathbb{R}^{2n},\omega_0), x\mapsto x-p.
\end{equation}
Since $\Psi(p)=p$,  $\phi\circ\Psi=\Psi\circ\phi$ and thus $c_{{\rm EHZ}}^{\Psi}(K)=c_{{\rm EHZ}}^{\Psi}(\phi(K))$
by the arguments below Proposition~1.2 of \cite{JinLu1916}.
 Let us write $\hat{K}=\phi(K)$ for convenience.
 Denote all $(2n-1)$-dimensional facets of it by  $\{\hat{F}_i\}_{i=1}^{{\bf F}_{\hat{K}}}$, the unit outer normal to $\hat{F}_i$ by  $\hat{n}_i$,
 the support function of $\hat{K}$ by $h_{\hat{K}}$. Then ${\bf F}_{\hat{K}}={\bf F}_{{K}}$,
 $\hat{F}_i=F_i-p$ and $\hat{n}_i=n_i$ for $i=1,\cdots, {\bf F}_{{K}}$, and
 $h_{\hat{K}}(y)=h_K(y)-\langle p,y\rangle$. By Step 1 we get
  $$
c^{\Psi}_{{\rm EHZ}}(\hat{K})=\min_{\bigl((\beta_i)_{i=1}^{{\bf F}_K},v,\sigma\bigr)\in M_{\Psi}(\hat{K})}\frac{2}{4\sum_{1\leqslant j<i\leqslant{\bf F}_K}
\beta_{\sigma(i)}\beta_{\sigma(j)}\omega_0(n_{\sigma(j)},n_{\sigma(i)})-\omega_0(\Psi v, v)},
$$
where with $\hat{h}_i=\hat{h}_{\hat{K}}(n_i)=h_K(n_i)-\langle p,n_i\rangle=h_i-\langle p,n_i\rangle$ for $i=1,\cdots, {\bf F}_{{K}}$,
$$
M_{\Psi}(\hat{K})=\left\{\big((\beta_i)_{i=1}^{{\bf F}_K},v, \sigma\bigr)\,\bigg|\,\begin{array}{ll}
&\sigma\in
S_{{\bf F}_K},\;\beta_i\geqslant
0,\;\sum_{i=1}^{{\bf F}_K}\beta_i\hat{h}_i=1,\;\sum_{i=1}^{{\bf F}_K}2\beta_i Jn_i=\Psi v-v, \\
&4\sum_{1\leqslant j<i\leqslant{\bf F}_K}
\beta_{\sigma(i)}\beta_{\sigma(j)}\omega_0(n_{\sigma(j)},n_{\sigma(i)})>\omega_0(\Psi v, v),\;v\in
E_{\Psi}
\end{array}
\right\}.
$$
Clearly, it remains to prove $M_{\Psi}(\hat{K})=M_{\Psi}({K})$. In fact, for any $\big((\beta_i)_{i=1}^{{\bf F}_K},v,\sigma\bigr)\in M_{\Psi}(\hat{K})$,
since
$$
1=\sum_{i=1}^{{\bf F}_K}\beta_i\hat{h}_i=\sum_{i=1}^{{\bf F}_K}\beta_i{h}_i-\langle p, \sum_{i=1}^{{\bf F}_K}\beta_in_i\rangle,
$$
 it suffices to prove $\langle p, \sum_{i=1}^{{\bf F}_K}\beta_in_i\rangle=0$.
 Note that $\sum_{i=1}^{{\bf F}_K}2\beta_i Jn_i=\Psi v-v$, $v\in E_{\Psi}$.
 We have
 \begin{eqnarray*}
 \langle p, \sum_{i=1}^{{\bf F}_K}\beta_in_i\rangle&=&\omega_0(p, \sum_{i=1}^{{\bf F}_K}\beta_i Jn_i)=
 \frac{1}{2}\omega_0(p, \Psi v-v)= \frac{1}{2}(\omega_0(p, \Psi v)-\omega_0(p,v))=0
 \end{eqnarray*}
 because $\omega_0(p, \Psi v)=\omega_0(\Psi p, \Psi v)=\omega_0(p, v)$. Hence
 $M_{\Psi}(\hat{K})\subset M_{\Psi}({K})$, and hence $M_{\Psi}({K})\subset M_{\Psi}(\hat{K})$
  since $K=\hat{K}-(-p)$ and $\Psi(-p)=-p$.
 \end{proof}

\section{Proofs of Theorems~\ref{th:main2},~\ref{th:converse}}\label{sec:main-con}

We have an analogue of Theorem~\ref{th:4.1}:
\begin{theorem}\label{th:5.1}
Let $K$ be a convex polytope as above (\ref{e:Comb formula}). If $0\in {\rm Int}(K)$,
 there exists a generalized leafwise chord on $\partial K \text{ for }\mathbb{R}^{n,k}$: $\gamma: [0,1] \rightarrow \partial K $ with $A(z)=\min\{A(x)|x $ is a generalized leafwise chord on $\partial K \text{ for }\mathbb{R}^{n,k}\}$ such that $\dot{\gamma}$ is piecewise constant and is composed of a finite sequence of vectors, i.e. there exists a sequence of vectors $(w_1,\ldots,w_m)$, and a sequence $(0=\tau_0<\cdots<\tau_{m-1}<\tau_{m}=1)$ so that $\dot{\gamma}(t) = w_i$ for $\tau_{i-1} < t < \tau_{i}$.
Moreover, for each $j \in \{1,\cdots,m\}$ there exists $i \in \{1,\cdots,{\bf F}_K\}$ so that $w_j = C_j J n_i$ , for some $C_j > 0$, and for each $i \in \{1,\cdots,{\bf F}_K\}$, the set $\{t : \exists C>0, \dot{\gamma}(t) = C J n_i\}$ is connected, i.e. for every $i$ there is at most one $j \in \{1,\ldots,m\}$ with $w_j = C_j J n_i$.
Hence there are at most ${\bf F}_K$ points of discontinuity in $\dot{\gamma}$, and $\gamma$ visits the interior of each facet at most once.
\end{theorem}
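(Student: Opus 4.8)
The plan is to transcribe the proof of Theorem~\ref{th:4.1} almost line by line, replacing Theorem~\ref{th:convex} by Theorem~\ref{th:cosi}, the spaces $\mathcal{F}_\Psi,\mathcal{A}_\Psi$ by $\mathscr{F}_2,\mathcal{A}_2$, the functional $I_K$ by $I_2$, and ``generalized $\Psi$-characteristic on $\partial K$'' by ``generalized leafwise chord on $\partial K$ for $\mathbb{R}^{n,k}$''. I would begin with a generalized leafwise chord $z:[0,T]\to\partial K$ realizing the minimal action, $A(z)=c^{n,k}(K)=c_{\rm LR}(K,K\cap\mathbb{R}^{n,k})=:T$ (its existence follows from (\ref{e:coiEHZ}) and Theorem~\ref{th:cosi}). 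Theorem~\ref{th:cosi} then furnishes $b\in\mathbb{R}^{n,k}$ and a minimizer $u\in\mathcal{A}_2$ of $I_2$ with $I_2(u)=T$ and $\dot u(t)\in\sqrt{T}\,{\rm conv}\{p_i\mid\sqrt{T}(u(t)-b)\in F_i\}$ for a.e.\ $t$, and Proposition~\ref{prop:1} (applied with $c=\sqrt{T}$ and $z=u$) yields $\int_0^1 H_K^\ast(-J\dot u(t))\,dt=T$. From this point the proof proceeds exactly as for Theorem~\ref{th:4.1}.

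Next I would, for each $N\in\mathbb{N}$, apply Lemma~\ref{lem:PH3.1} (getting a piecewise affine $\zeta_N$ with $\|u-\zeta_N\|_{W^{1,2}}\le 1/N$, whence $A(\zeta_N)\to 1$), then Lemma~\ref{lem:1} with $v_i=\sqrt{T}\,p_i$, then Lemma~\ref{lem:2} (the $\sqrt{T}\,p_i$ are pairwise distinct since distinct facets of a polytope have non-parallel outer normals). Since all three lemmas concern piecewise affine paths and keep the two endpoints fixed, the output is a piecewise affine path $u_N$ from $u(0)$ to $u(1)$ with $\dot u_N$ valued in $\{\sqrt{T}\,p_1,\dots,\sqrt{T}\,p_{{\bf F}_K}\}$, each facet visited at most once, and $A(u_N)\ge A(\zeta_N)\to 1$. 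Rescaling $u_N':=u_N/\sqrt{A(u_N)}$ and setting $c_N:=\sqrt{T}/\sqrt{A(u_N)}$, Proposition~\ref{prop:1} gives $I_2(u_N')=c_N^2$, whereas minimality of $I_2(u)=T$ together with $A(\zeta_N)\to1$ forces $c_N\to\sqrt{T}$ and $A(u_N)\to1$. Using compactness of $S_{{\bf F}_K}\times\{(t_i)_{i=1}^{{\bf F}_K}:t_i\ge0,\ \sum_i t_i=1\}$ exactly as in Theorem~\ref{th:4.1}, I pass to a subsequence along which the permutation stabilizes to a fixed $\sigma$ and the interval lengths converge, obtaining a piecewise affine limit path $u_\infty'$ with $\dot u_\infty'(t)=\sum_{i=1}^{{\bf F}_K}\chi_{I_i^\infty}(t)\sqrt{T}\,p_{\sigma(i)}$, $A(u_\infty')=1$, $I_2(u_\infty')=\lim_N c_N^2=T$, and, because $\int_0^1\dot u_\infty'=\int_0^1\dot u$, endpoints $u_\infty'(0)=u(0)$ and $u_\infty'(1)=u(1)$; in particular $u_\infty'(0),u_\infty'(1)\in\mathbb{R}^{n,k}$ and $u_\infty'(1)-u_\infty'(0)\in V_0^{n,k}$.

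The one point not automatic from the proof of Theorem~\ref{th:4.1}, and which I expect to be the main obstacle, is that membership in $\mathscr{F}_2$ also requires $\int_0^1 x(t)\,dt\in JV_0^{n,k}$, a constraint that Lemmas~\ref{lem:1}--\ref{lem:2} do not preserve, so \emph{a priori} $u_\infty'\notin\mathcal{A}_2$. I would repair this by the translation $\hat u:=u_\infty'-P_{n,k}\!\int_0^1 u_\infty'(t)\,dt$: as $P_{n,k}\!\int_0^1 u_\infty'\in\mathbb{R}^{n,k}$, the endpoints of $\hat u$ still lie in $\mathbb{R}^{n,k}$ and $\hat u(1)-\hat u(0)=u_\infty'(1)-u_\infty'(0)\in V_0^{n,k}$, while $\int_0^1\hat u(t)\,dt=(I_{2n}-P_{n,k})\!\int_0^1 u_\infty'(t)\,dt\in JV_0^{n,k}$; moreover $\dot{\hat u}=\dot u_\infty'$, so $\hat u$ keeps the piecewise constant velocity valued in $\sqrt{T}\{p_1,\dots,p_{{\bf F}_K}\}$ with each facet used at most once, and $I_2(\hat u)=T$. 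Also $A(\hat u)=A(u_\infty')$, because $A(\hat u)-A(u_\infty')=\tfrac12\langle -J(u_\infty'(1)-u_\infty'(0)),\,-P_{n,k}\!\int_0^1 u_\infty'(t)\,dt\rangle$ vanishes: $-J(u_\infty'(1)-u_\infty'(0))\in JV_0^{n,k}$ (since $u_\infty'(1)-u_\infty'(0)\in V_0^{n,k}$) is orthogonal to $-P_{n,k}\!\int_0^1 u_\infty'\in\mathbb{R}^{n,k}$ in the decomposition $\mathbb{R}^{2n}=JV_0^{n,k}\oplus\mathbb{R}^{n,k}$. Hence $\hat u\in\mathcal{A}_2$, and since $\min_{\mathcal{A}_2}I_2=c^{n,k}(K)=T$ by Theorem~\ref{th:cosi} and (\ref{e:coiEHZ}), $\hat u$ is a minimizer of $I_2$. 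Applying the forward implication of Theorem~\ref{th:cosi} to $\hat u$ then produces ${\bf a}_0\in\mathbb{R}^{n,k}$ so that $\gamma(t):=\sqrt{T}\,\hat u(t)+{\bf a}_0/\sqrt{T}$ on $[0,1]$ is a generalized leafwise chord on $\partial K$ for $\mathbb{R}^{n,k}$ with $A(\gamma)=T=c^{n,k}(K)=c_{\rm LR}(K,K\cap\mathbb{R}^{n,k})$; since multiplication by $\sqrt{T}$ and translation preserve piecewise constancy of the velocity and $\dot\gamma=\sqrt{T}\,\dot{\hat u}$ takes values in $T\{p_1,\dots,p_{{\bf F}_K}\}$, each $\dot\gamma(t)$ is a positive multiple of some $Jn_i$ (recall $p_i=\frac{2}{h_i}Jn_i$ with $h_i>0$), and each facet is used at most once, so $\gamma$ enjoys all the asserted properties, which would complete the proof. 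Apart from the integral-constraint fix just described, everything is a verbatim adaptation of the argument for Theorem~\ref{th:4.1}.
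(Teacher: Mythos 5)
Your proposal is correct and follows essentially the same route as the paper: run the Theorem~\ref{th:4.1} argument with Theorem~\ref{th:cosi}, $\mathcal{A}_2$, $I_2$ in place of Theorem~\ref{th:convex}, $\mathcal{A}_\Psi$, $I_K$, and then restore the lost constraint $\int_0^1 x(t)\,dt\in JV_0^{n,k}$ by translating $u_\infty'$ by $-P_{n,k}\int_0^1 u_\infty'(t)\,dt$, which is exactly the paper's fix. Your explicit check that this translation preserves the action (via the orthogonality of $JV_0^{n,k}$ and $\mathbb{R}^{n,k}$) is a detail the paper leaves implicit, and it is correct.
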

\begin{proof}

Let $z:[0,T]\to \partial K$ be a generalized leafwise chord  with action $A(z)=c_{{\rm LR}}(K, K\cap\mathbb{R}^{n,k})=c^{n,k}(K)$  for $\mathbb{R}^{n,k}$.
By Theorem~\ref{th:cosi} we can assume it to satisfy (\ref{e:repara4}) (by a reparametrization if necessary), and obtain that
the path
$$
u:[0,1]\to\mathbb{R}^{2n},\;t\mapsto \frac{1}{\sqrt{T}}z(Tt)- \frac{1}{\sqrt{T}}P_{n,k}\int^1_0 z(Tt)dt
$$
 belongs to  $\mathcal{A}_2$ and satisfies $I_2(u)=T=c^{n,k}(K)$.  Moreover
$$
\dot{u}(t)=\sqrt{T}\dot{z}(Tt)\in \sqrt{T}{\rm conv}\{p_i\,|\, \sqrt{T}(u(t)-b)\in F_i\}\subset T^{1/2}\cdot {\rm conv}\{p_1,\cdots,p_{{\bf F}_K}\}
$$
with $b=- \frac{1}{\sqrt{T}}P_{n,k}\int^1_0 z(Tt)dt$ and with $c=T^{1/2}$, and so  $I_2(u)=c^2$ by Proposition \ref{prop:1}.

For every $N\in\mathbb{N}$,  Lemma~\ref{lem:PH3.1} yields a piecewise affine path $\zeta_N$ such that
$$
\parallel u-\zeta_N\parallel_{W^{1,2}}\leqslant {\frac{1}{N}},\quad \zeta_N(0)=u(0),\quad \zeta_N(1)=u(1)\quad\hbox{and}\quad
\dot{\zeta}_N(t)\in c \cdot {\rm conv}\{p_1,\cdots,p_{{\bf F}_K}\}
$$
for almost every $t$. By applying Lemma~\ref{lem:1} with $v_i=cp_i, i=1,\cdots,{\bf F}_K$ to $\zeta_N$,
we get a piecewise affine path $\zeta_N' \in W^{1,2}([0,1],\R^{2n})$ such that
$$
A(\zeta_N')\ge A(\zeta_N),\quad \zeta_N'(0)=u(0),\quad \zeta_N'(1)=u(1),\quad
\dot{\zeta}_N'(t) \in \{ v_1,\ldots, v_{{\bf F}_K}\}
$$
for almost every $t$.
Applying Lemma~\ref{lem:2}  to $\zeta_N'$ again, we can obtain a piecewise affine path $u_N:[0,1]\to\R^{2n}$
from $u(0)$ to $u(1)$ such that
$$
\dot{u}_N(t)=\sum_{i=1}^{m_N}\chi_{I_i^N}(t)v_i^N
$$
where $v_i^N=v_j$ for some $j\in\{1,\cdots,{\bf F}_K\}$ and for every $j$ there is at most one such $i$,
  and that
$$
A_N:=\sqrt{A(u_N)}\geqslant \sqrt{A(\zeta_N)}.
$$
Define $u_N':=\frac{u_N}{A_N}$  and  $c_N=:\frac{c}{A_N}$. Notice that
$\int_0^1u_N'(t)dt$ may not belong to $JV_0^{n,k}$ and $u_N'$ may not belong to $\mathscr{F}_2$.
Recall that  $P_{n,k}:\mathbb{R}^{2n}=JV_0^{n,k}\oplus\mathbb{R}^{n,k}\to \mathbb{R}^{n,k}$ is the orthogonal projection. Define
$$
y_N:=u_N'-P_{n,k}\left(\int_0^1u_N'(t)dt\right).
$$
Then $\int_0^1y(t)dt\in JV_0^{n,k}$ and
\begin{eqnarray*}
A(y_N)&=&\int_0^1\bigg\langle -J\dot{u}_N', u_N'(t)-P_{n,k}\Big(\int_0^1u_N'(t)dt\Big)\bigg\rangle dt\\
&=&A(u_N')-\Big\langle J(u_N'(1)-u_N'(0)), P_{n,k}\big(\int_0^1u_N'(t)dt\big)\Big\rangle.
\end{eqnarray*}
Since $u_N'(1)-u_N'(0)\in V_0^{n,k}$,  $A(y_N)=A(u_N')=1$. Thus, $y_N\in\mathcal{A}_2$.
Write  $w_i^N:=\frac{v_i^N}{A_N}$ for the velocities of $y_N$, which sits in
the set $\frac{c}{A_N}\cdot\{p_1,\cdots,p_{{\bf F}_K}\}$. Since
$\parallel u-\zeta_N\parallel_{W^{1,2}}\leqslant {\frac{1}{N}}$ we deduce  that $A(\zeta_N)\rightarrow 1$
as $N\rightarrow\infty$. Hence $\varliminf_{N\rightarrow\infty} A_N\geqslant 1$, and $\varlimsup_{N\rightarrow\infty}c_N\leqslant c$.
Moreover  Proposition~\ref{prop:1} and  the minimality of $I_2(u)$
imply that $c_N^2=I_K(y_N)\geqslant I_K(u)=c^2$. Then $\lim_{N\rightarrow\infty}c_N=c$ and thus
$\lim_{N\rightarrow\infty}A_N=1$.

Recall that the set $\mathcal{A}^1$ is defined as above (\ref{e:Phi})
and that the map $\Phi$ is as in (\ref{e:Phi}).
%
By the proof of Theorem~\ref{th:4.1}, the image ${\rm Im}(\Phi)$ is contained in the compact subset of
$S_{{\bf F}_K}\times\mathbb{R}^{{\bf F}_K}$,
$$
S_{{\bf F}_K}\times\{(t_1,\cdots,t_{{\bf F}_K})\in\mathbb{R}^{{\bf F}_K}\,|\, t_i\geqslant0\;\forall i,\;\sum_{i=1}^{{\bf F}_K}t_i=1\}.
$$
Since  $y_N\in\mathcal{A}^1$ with $C=c_N$, we can write $\Phi(y_N)=(\sigma^N,(t_1^N,\cdots, t_{{\bf F}_K}^N))$.
After passing to a subsequence, we can also assume that $\sigma^N=\sigma$ is constant, and $(t_1^N,\cdots, t_{{\bf F}_K}^N)$ converges to a vector $(t_1^{\infty},\cdots, t_{{\bf F}_K}^{\infty})$.
Define
\begin{eqnarray*}
&&\tau_0^\infty=0,\quad \tau_1^\infty=\tau_0^\infty+t_1^{\infty},\quad
 \tau_j^\infty=\tau_0^\infty+ \sum^j_{i=1}t_i^{\infty},\quad j=2,\cdots, {{\bf F}_K},\\
&&I_i^\infty = (\tau_{i-1}^\infty,\tau_i^\infty),\quad i=1,\cdots, {{\bf F}_K}
\end{eqnarray*}
and  the piecewise affine path  $u_{\infty}'(t)=u(0)+\int^t_0\dot{u}_{\infty}'(s)ds$ with
$$
\dot{u}_{\infty}'(t)=\sum_{i=1}^{{\bf F}_K}\chi_{I_i^\infty}(t) c\cdot p_{\sigma(i)}.
$$

Similar to the proof of Theorem~\ref{th:4.1}, one gets $u_{\infty}'$ satisfying $u_{\infty}'(0)=u(0), u_{\infty}'(1)=u(1)$, $ A(u_{\infty}')=1$
and $I_{2}(u_{\infty}')=c^2$.
Define
$$
u_{\infty}:=u_{\infty}'-P_{n,k}\bigg(\int_0^1u_{\infty}'(t)dt\bigg).
$$
Then $u_{\infty}\in\mathcal{A}_2$ and $I_2(u_{\infty})=T=c^{n,k}(K)$.
By Theorem~\ref{th:cosi} we have
${\bf a}_0\in \mathbb{R}^{n,k}$ such that
$$
[0, 1]\ni t\mapsto \gamma(t):=\sqrt{T}u_\infty(t)+ {\bf a}_0/\sqrt{T}
$$
is a  piecewise affine generalized leafwise chord  on $\partial K$  for $\mathbb{R}^{n,k}$ with action
 $$
 A(\gamma)=I_2(u)=c_{{\rm LR}}(K, K\cap\mathbb{R}^{n,k})
 $$
  and satisfying  $\dot{\gamma}(t)\in T\cdot\{p_1,\cdots,p_{{\bf F}_K}\}$ for almost every $t\in [0,1]$ and that
    the set $\{t:\dot{\gamma}(t)=p_i\}$ is connected for every $i$.
Recall $p_i=\frac{2}{h_i}Jn_i$. Theorem~\ref{th:5.1} is proved.
\end{proof}

\begin{proof}[Proof of Theorem~\ref{th:main2}]
{\bf Step 1}. {\it Case $0\in{\rm Int}(K)$}.
Let $\mathcal{A}_2^0$ consist of $z\in \mathcal{A}_2$ for which there exist $C>0$ and
an increasing sequence of numbers $0 = \tau_0 \leq \tau_1 \leq \ldots \leq \tau_{{\bf F}_K} = 1$
such that
\begin{equation}\label{e:affine2}
\dot{z}(t)=\sum_{i=1}^{{\bf F}_K}\chi_{I_i}(t) C\cdot p_{\sigma(i)}
\end{equation}
with $I_i = (\tau_{i-1},\tau_i)$, where $\sigma\in S_{{\bf F}_K}$ is the permutation on $\{1,\cdots,{\bf F}_K\}$.
Then $u'_\infty$ in the proof of Theorem~\ref{th:5.1} belongs to $\mathcal{A}_2^0$
and satisfies $I_K(u'_\infty)=c_{{\rm LR}}(K, K\cap\mathbb{R}^{n,k})$.
Thus
\begin{equation}\label{e:affine3}
c_{{\rm LR}}(K, K\cap\mathbb{R}^{n,k})=\min\{I_2(z)\,|\,z\in \mathcal{A}_2\}=\min\{I_2(z)\,|\,z\in \mathcal{A}_2^0\}.
\end{equation}
For any $z\in \mathcal{A}_2^0$, we have $z(0),z(1)\in\mathbb{R}^{n,k}$, $\dot{z}$ has the form of (\ref{e:affine2})
and hence
$$
V_0^{n,k}\ni z(1)-z(0)=\int_0^1\dot{z}(t)
dt=C\sum_{i=1}^{{\bf F}_K}T_i p_{\sigma(i)}
$$
where $T_i=|I_i|$, and Proposition~\ref{prop:int} yields
$$
1=\frac{1}{2}\int_0^1 \langle -J \dot{z}, z \rangle dt=\frac{1}{2}C^2\sum_{1\leqslant
j<i\leqslant{\bf F}_K} T_iT_j\omega_0(p_{\sigma(j)},p_{\sigma(i)})+\frac{1}{2}\omega_0(z(0),z(1)).
$$
Note that $\omega_0(z(0),z(1))=\omega_0(z(0),z(1)-z(0))=0$, and
$I_2(z)=C^2$ by Proposition~\ref{prop:1}. Then
\begin{equation}\label{e:affine4}
I_2(z)=\frac{2}{\sum_{1\leqslant j<i\leqslant{\bf F}_K}
T_iT_j\omega_0(p_{\sigma(j)},p_{\sigma(i)})}>0.
\end{equation}
Let
$$
M^\ast(K)=\left\{((T_i)_{i=1}^{{\bf F}_K},\sigma)\,\bigg|\,\begin{array}{ll}
&\sigma\in S_{{\bf F}_K}, T_i\geqslant
0,\,\sum_{i=1}^{{\bf F}_K}T_i=1,\,\sum_{i=1}^{{\bf F}_K}T_ip_{\sigma(i)}\in V_0^{n,k}\\
&\sum_{1\leqslant j<i\leqslant{\bf F}_K} T_iT_j\omega_0(p_{\sigma(j)},p_{\sigma(i)})>0
\end{array}
\right\}.
$$
For every pair $((T_i)_{i=1}^{{\bf F}_K},\sigma)\in M^\ast(K)$, as in the construction of $u'_\infty$ in the proof of Theorem~\ref{th:4.1}
we can use $((T_i)_{i=1}^{{\bf F}_K},\sigma)$ to construct a $z\in \mathcal{A}_2^0$ such that (\ref{e:affine4}) holds.
It follows  that
$$
c_{{\rm LR}}(K, K\cap\mathbb{R}^{n,k})=\min_{((T_i)_{i=1}^{{\bf F}_K},\sigma)\in M^\ast(K)}\frac{2}{\sum_{1\leqslant
j<i\leqslant{\bf F}_K} T_iT_j\omega_0(p_{\sigma(j)},p_{\sigma(i)})},
$$
Define $\beta_{\sigma(i)}:=\frac{T_i}{h_{\sigma(i)}}$. Since $p_i=\frac{2}{h_i}Jn_i$,  The above two formulas give the desired formula in this case.

\noindent{\bf Step 2}. {\it General case}.
 Let $p\in{\rm Int}(K)\cap\mathbb{R}^{n,k}$. Then the symplectomorphism $\phi$ defined by
 (\ref{e:translate}) satisfies
$c_{{\rm LR}}(\phi(K), \phi(K)\cap\mathbb{R}^{n,k})=c_{{\rm LR}}(K, K\cap\mathbb{R}^{n,k})$
by the arguments at the beginning of \cite[\S3]{JinLu1917}.
As in Step~2 of the proof of Theorem \ref{th:main}
let $\hat{K}=\phi(K)$.
 By Step 1 we obtain
$$
c_{{\rm LR}}(\hat{K}, \hat{K}\cap\mathbb{R}^{n,k})=\frac{1}{2}\min_{
((\beta_i)_{i=1}^{{\bf F}_K},\sigma)\in M(\hat{K})}\frac{1}{\sum_{1\leqslant
j<i\leqslant{\bf F}_K}
\beta_{\sigma(i)}\beta_{\sigma(j)}\omega_0(n_{\sigma(j)},n_{\sigma(i)})},
$$
where
$$
M(\hat{K})=\left\{((\beta_i)_{i=1}^{{\bf F}_K},\sigma)\,\bigg|\,\begin{array}{ll}
&\beta_i\geqslant
0,\;\sum_{i=1}^{{\bf F}_K}\beta_i\hat{h}_i=1,\;\sum_{i=1}^{{\bf F}_K}\beta_iJ n_i\in V_0^{n,k},\\
&\sum_{1\leqslant
j<i\leqslant{\bf F}_K}
\beta_{\sigma(i)}\beta_{\sigma(j)}\omega_0(n_{\sigma(j)},n_{\sigma(i)})>0,\;\sigma\in S_{{\bf F}_K}
\end{array}
\right\}.
$$
Now we are in position to prove that $M(\hat{K})$ is equal to $M({K})$ in (\ref{e:1.6}).
 We only need to prove $M(\hat{K})\subset M({K})$
because of obvious reasons.
 Since $((\beta_i)_{i=1}^{{\bf F}_K},\sigma)
\in M(\hat{K})$ satisfies
$$
1=\sum_{i=1}^{{\bf F}_K}\beta_i\hat{h}_i=\sum_{i=1}^{{\bf F}_K}\beta_i{h}_i-\Big\langle p, \sum_{i=1}^{{\bf F}_K}\beta_in_i\Big\rangle,
$$
 it suffices to prove $\langle p, \sum_{i=1}^{{\bf F}_K}\beta_in_i\rangle=0$.
 Note that $\sum_{i=1}^{{\bf F}_K}\beta_i Jn_i\in V_0^{n,k}$.
 We have
 \begin{eqnarray*}
 \Big\langle p, \sum_{i=1}^{{\bf F}_K}\beta_in_i\Big\rangle&=&\omega_0\Big(p, \sum_{i=1}^{{\bf F}_K}\beta_i Jn_i\Big)=0
 \end{eqnarray*}
 because $\mathbb{R}^{n,k}$ and $V_0^{n,k}$ are $\omega_0$-orthogonal.
   Hence $M(\hat{K})\subset M({K})$.
 \end{proof}

\begin{proof}[Proof of Theorem~\ref{th:converse}]
Let $p\in D\cap L\cap\mathbb{R}^{1,0}$, define
$\phi:\mathbb{R}^2\rightarrow\mathbb{R}^2, x\mapsto x-p$.
As in \cite[\S3]{JinLu1917} we have $c_{\rm LR}(D, D\cap\mathbb{R}^{1,0})=c_{\rm LR}(\phi(D), \phi(D)\cap\mathbb{R}^{1,0})$ and
$$
c_{\rm LR}(D_1, D_1\cap\mathbb{R}^{1,0})=c_{\rm LR}(\phi(D_1), \phi(D_1)\cap\mathbb{R}^{1,0}),\quad
c_{\rm LR}(D_2, D_2\cap\mathbb{R}^{1,0})=c_{\rm LR}(\phi(D_2), \phi(D_2)\cap\mathbb{R}^{1,0}).
$$
Thus we can assume $0\in D\cap L\cap\mathbb{R}^{1,0}$ below.

Let $H^+:=\{(x,y)\in\mathbb{R}^2\,|\, y\geqslant0\}$, $H^-:=\{(x,y)\in\mathbb{R}^2\,|\, y\leqslant0\}$,
and write $K^+=H^+\cap K$ and $K^-=H^-\cap K$ for any subset $K\subset\mathbb{R}^2$.
On each of $\partial D$, $\partial D_1$ and $\partial D_2$ there only exist two
generalized leafwise chords for $\mathbb{R}^{1,0}$, that is,
$(\partial D)^+$ and $(\partial D)^-$ on $\partial D$,
$(\partial D_1)^+$ and $(\partial D_1)^-$ on $\partial D_1$,
$(\partial D_2)^+$ and $(\partial D_2)^-$ on $\partial D_2$.
Note that  a GLC $x$ on $\partial D$ for $\mathbb{R}^{1,0}$ and the line segment $D\cap\mathbb{R}^{1,0}$  form a loop $\gamma$
and that $\langle -J\dot{z},z\rangle$ vanishes along the line segment $D\cap \mathbb{R}^{1,0}$.
Using these and Stokes theorem we deduce that $A(x)=\int_x qdp=\int_\gamma qdp$
is equal to the symplectic area of the domain surrounded by $\gamma$. Hence
\begin{eqnarray*}
&&c_{\rm LR}(D, D\cap\mathbb{R}^{1,0})=\min\{{\rm Area}(D^+),{\rm Area}(D^-)\},\\
&&c_{\rm LR}(D_1, D_1\cap\mathbb{R}^{1,0})=\min\{{\rm Area}(D_1^+),{\rm Area}(D_1^-)\},\\
&&c_{\rm LR}(D_2, D_2\cap\mathbb{R}^{1,0})=\min\{{\rm Area}(D_2^+),{\rm Area}(D_2^-)\}.
\end{eqnarray*}
Assume without loss of generality that $c_{\rm LR}(D, D\cap\mathbb{R}^{1,0})={\rm Area}(D^+)$. Then
\begin{eqnarray*}
 c_{\rm LR}(D_1, D_1\cap\mathbb{R}^{1,0})+c_{\rm LR}(D_2, D_2\cap\mathbb{R}^{1,0})&\leqslant& {\rm Area}(D_1\cap D^+)+{\rm Area}(D_2\cap D^+)\\
 &=&{\rm Area}(D^+)=c_{\rm LR}(D, D\cap\mathbb{R}^{1,0}).
 \end{eqnarray*}
\end{proof}

\medskip
\begin{tabular}{l}
 School of Mathematical Sciences, Beijing Normal University\\
 Laboratory of Mathematics and Complex Systems, Ministry of Education\\
 Beijing 100875, The People's Republic of China\\
 E-mail address: shikun@mail.bnu.edu.cn,\hspace{5mm}gclu@bnu.edu.cn\\
\end{tabular}

\end{document}